\newtheorem{theorem}{Theorem}[section]
\newtheorem{definition}[theorem]{Definition}
\newtheorem{remark}[theorem]{Remark}
\newcounter{problem}
\newenvironment{problem}{\refstepcounter{problem}\vspace{1.5ex}
{\noindent\bf Problem \theproblem.}\hspace{0.3em}\parindent=0pt}{\vspace{1ex}}
\newcounter{example}
\newcommand{\norm}[1]{\left\Vert#1\right\Vert}
\newenvironment{proof}{\vspace{1ex}
{\it Proof. }\hspace{0.3em}}{\vspace{1ex}} \journal{ }
\begin{document}

\begin{frontmatter}
\title{Two new classes of exponential Runge--Kutta
integrators \\ for efficiently solving stiff systems or highly
oscillatory problems }


\author[1]{Bin Wang}
\ead{wangbinmaths@xjtu.edu.cn}

\author[2]{Xianfa Hu}
\ead{zzxyhxf@163.com}

\author[3]{Xinyuan Wu\corref{cor1}}
\ead{xywu@nju.edu.cn}

\cortext[cor1]{Corresponding author}
\address[1]{School of Mathematics and Statistics, Xi'an Jiaotong University, Xi'an 710049, Shannxi,
P.R.China}
\address[2]{Department of Mathematics, Shanghai Normal University, Shanghai 200234, P.R.China}
\address[3]{Department of Mathematics, Nanjing University,
Nanjing 210093, P.R.China}

\begin{abstract}
We note a fact that stiff systems or differential equations that
have highly oscillatory solutions cannot be solved efficiently using
conventional methods. In this paper, we study two new classes of
exponential Runge--Kutta (ERK) integrators for efficiently solving
stiff systems or highly oscillatory problems. We first present a
novel class of explicit modified version of  exponential
Runge--Kutta (MVERK) methods based on the order conditions.
Furthermore, we consider a class of explicit simplified version of
exponential Runge--Kutta (SVERK) methods. Numerical results
demonstrate the high efficiency of the explicit MVERK integrators
and SVERK  methods derived in this paper compared with the
well-known explicit ERK integrators for stiff systems or highly
oscillatory problems in the literature.
\end{abstract}

\begin{keyword}
Exponential integrators, stiff systems, highly oscillatory
problems, Explicit methods, Runge--Kutta-type methods 

\end{keyword}

\end{frontmatter}
Mathematics Subject Classification (2010): 65L04, 65L05, 65L06,
65L08

\pagestyle{myheadings} \thispagestyle{plain} \markboth{}
{\centerline{\small B. Wang, X. Hu  and X. Wu}}

\section{Introduction}\label{sec:introducton}
The advantages {of} exponential methods have been clarified in the
literature (see, e.g. \cite{Hochbruck1998,LiYW2016a}), and there is
 a MATLAB package for exponential integrators named EXPINT which
is described in \cite{Berland2007Package}. Generally speaking,
exponential methods permit larger stepsize and achieve higher
accuracy than non-exponential ones. {There is no doubt that the idea
to make use of matrix exponentials in a Runge--Kutta-type integrator
by no means new. However, most of them appeared in the literature
are expensive since the coefficients of these integrators are
heavily dependent on the evaluations of matrix exponentials, even
though an explicit exponential Runge--Kutta-type integrator of them
is also expensive when applied to the underlying systems in
practice.} Therefore, we try  to design two novel classes of
exponential Runge--Kutta methods, which can reduce the computational
cost to some extent. To this end, we first pay our attention to  a
modified version of exponential Runge--Kutta (MVERK) integrators in
this study. We then consider a simplified version of exponential
Runge--Kutta  (SVERK) methods, and this motivates the present paper.

We now consider initial value problems expressed in the autonomous
form of first-order differential equations
\begin{equation}
\label{stiffODEs} \left\{
\begin{aligned}
& y'(t)=My(t)+f(y(t)),\quad t\in [t_0,T_{end}],
\\
& y(t_0)=y_0,
\end{aligned}\right.
\end{equation}
where $y:~\mathbb{R}\rightarrow \mathbb{R}^d$,
$f:~\mathbb{R}^d\rightarrow \mathbb{R}^d$, and the matrix $(-M)$ is
a $d\times d$ symmetric positive definite or skew-Hermitian with
eigenvalues of large modulus. Problems
 of the form \eqref{stiffODEs} arise in a variety of fields in science and engineering, such as  quantum mechanics, fluid
 mechanics, flexible mechanics,
and electrodynamics. This system is frequently yielded from
linearising the integration of large stiff systems of nonlinear
initial value problems
\begin{equation*}\left\{
\begin{aligned}
z'(t)&=g(z(t)),\\
z(t_0)&=z_0.
\end{aligned}\right.
\end{equation*}
Semidiscretised mixed initial-boundary value problems of evolution
PDEs, such as advection-diffusion equations and the rewritten
Navier-Stokes equations (see, e.g. \cite{JCP(1998)_Beylkin}), also
provide examples of this type, which can be formulated as an
abstract form:
\begin{equation}\label{evolutionPDEs}
\left\{
\begin{aligned}
&\frac{\partial u(x,t)}{\partial t}=\mathcal{L}u+
\mathcal{N}(u),\quad x\in D,\ t\in[t_0,T_{end}],
\\
&B(x)u(x,t)=0, \quad x\in \partial D, \ t\ge t_0,
\\
&u(x,t_0)=g(x), \quad x\in D,
\end{aligned}
\right.
\end{equation}
where $D$ is a spatial domain with boundary $\partial D$ in
$\mathbb{R}^d$, $\mathcal{L}$ and $\mathcal{N}$ represent
respectively linear and nonlinear operators, and $B(x)$ denotes {a}
boundary operator.

The standard Runge--Kutta (RK) methods  have been deeply rooted in
researches and engineers who are interested in scientific computing
due to their simplicity and their ease of implementation.
Unfortunately, however, it is  known that stiff systems or highly
oscillatory problems cannot be solved efficiently using standard
explicit methods since standard explicit methods need a very small
stepsize and hence a long runtime to reach an acceptable accuracy.
Therefore, in the development and design of numerical algorithms,
established methods are constantly improved with the development of
 computing technology. It is true that exponential integrators (see, e.g. \cite{Berland2005,Bhatty2017,Celledoni2008,Cox2002,Dimarco2011,Dujardin2009,Hochbruck1997,Hochbruck1998,Hochbruck2005a,Hochbruck2005b,Hochbruck2010,
Kassam2005,Lawson1967,Shen2019,WangBin2021}):
\begin{equation}\label{ERK}
\left\{
\begin{aligned}
& Y_i=e^{c_ihM}y_0 + h\sum_{j=1}^{s}\bar{a}_{ij}(hM)f(Y_j), \quad
i=1,\ldots,s,
\\
& y_{1}=e^{hM}y_0+ h\sum_{i=1}^{s}\bar{b}_{i}(hM)f(Y_i),
\end{aligned}\right.
\end{equation}
that can {exactly} integrate the linear equation $y'(t)=My(t)$ are
more favorable than non-exponential integrators in solving the
problem \eqref{stiffODEs}, which exhibits remarkable `stiffness'
properties. In \eqref{ERK}, $c_i$ for $i=1,\ldots,s$ are constants,
$\bar{a}_{ij}(hM)$ and $\bar{b}_{i}(hM)$ are matrix-valued functions
of $hM$. It is worth noting that an ERK method \eqref{ERK} reduces
to a classical RK method if $M\rightarrow \mathbf{0}$. With regard
to
 semi-linear Hamiltonian systems,  symplectic
exponential methods and energy-preserving exponential methods are
important in the sense of geometric integration (see, e.g.
\cite{LiYW2016a,Mei2017,Mei-Siam2022,WangBinJCAM2019,
WangBinJCP2019}), but  all of them are implicit (normally depending
on a Newton--Raphson procedure  due to the stiffness), and hence
they are not the subject of this paper.

 The main theme of this paper is two new classes of exponential Runge--Kutta (ERK) integrators and we will focus on explicit methods. Hence, the main
contribution of this study is to present new explicit ERK methods
for solving the system \eqref{stiffODEs} with lower computational
cost. Moreover, these ERK methods will reduce the computation of
matrix exponentials as far as possible, and be more closer to the
distinguishing feature of standard Runge--Kutta methods: simplicity
and ease of implementation. 

The remainder of this paper is organised as follows. In
Section~\ref{sec:CRERKmethod}, we first formulate  a modified
version of explicit exponential   Runge--Kutta (MVERK)  methods. We
then present a simplified version of explicit exponential
Runge--Kutta (SVERK) integrators in Section~\ref{sec:SVERKmethod}.
We are concerned with the analytical aspect for our new explicit ERK
methods in Section \ref{sec:AnalysisERKmethod}.
Numerical experiments including Allen--Cahn
equation, the averaged system in wind-induced
 oscillation and the nonlinear Schr\"{o}dinger equation are
implemented in Section~\ref{numExperiments}, and the numerical
results show the comparable accuracy and efficiency of our new
explicit ERK integrators. We draw our conclusions in the last
section.

\section{A  modified version of ERK methods}\label{sec:CRERKmethod}

The idea of the underlying modified version of  ERK methods is based
on inheriting the internal stages and modifying the update of
standard RK methods.

\begin{definition} An $s$-stage modified version of ERK
(MVERK) methods applied with stepsize $h>0$ for solving
\eqref{stiffODEs} is defined by
\begin{equation}\label{MVERK}
\left\{
\begin{aligned}
& Y_i=y_0 + h\sum_{j=1}^{s}\bar{a}_{ij}(MY_j+f(Y_j)), \quad
i=1,\ldots,s,
\\
& y_{1}=e^{hM}y_0+ h\sum_{i=1}^{s}\bar{b}_{i}f(Y_i)+w_s(hM),
\end{aligned}\right.
\end{equation}
where $\bar{a}_{ij}$ and $\bar{b}_{i}$ are real constants, $w_s(hM)$
is a suitable matrix-valued function of $hM$ (or zero), and in
particular, $w_s(hM)=0$ when $M\rightarrow 0$.
\end{definition}

\begin{remark} Differently from the standard ERK methods, MVERK methods are dependent on
a matrix-valued function $w_s(hM)$.\end{remark}

\begin{remark}It is important to note that $w_s(hM)$  appearing in \eqref{MVERK} is independent of
matrix-valued exponentials and will change with the order $p$ of the
underlying MVERK method. However, the MVERK methods with same order
$p$ share the same $w_s(hM)$. The  choice of $w_s(hM)$ relies
heavily on the order conditions,  {which must coincide} with the
order conditions for the standard RK methods.

\end{remark}

It is clear that our MVERK methods exactly integrate the following
homogeneous linear system

\begin{equation}\label{homog}
y'(t)=My(t),\quad y(0)=y_0,
\end{equation}
which has exact solution
\begin{align*}
y(t)=e^{tM}y_0.
\end{align*}
This  is an essential property of an exponential integrator. Since
$(-M)$ appearing in \eqref{stiffODEs}  is symmetric positive
definite or skew-Hermitian with eigenvalues of large modulus, the
exponential $e^{tM}$ possesses many nice features such as uniform
boundedness. In particular, the exponential contains the full
information on linear oscillations {when} \eqref{stiffODEs} is a
highly oscillatory problem.

The MVERK method \eqref{MVERK} can be represented briefly in
 Butcher's notation by the following block tableau of coefficients:
\begin{equation}\label{Tableau-MVERK}
\begin{aligned} &\quad\quad\begin{tabular}{c|c|c}
 $\bar{c}$&$\mathbf{I}$&$\bar{A}$ \\
 \hline
  $\raisebox{-1.3ex}[1.0pt]{$e^{hM}$}$ & $\raisebox{-1.3ex}[1.0pt]{$w_s(hM)$}$&$\raisebox{-1.3ex}[1.0pt]{$\bar{b}^{\intercal}$}$  \\
\end{tabular}
~=
\begin{tabular}{c|c|ccc}
 $\bar{c}_1$&$I$&$\bar{a}_{11}$&$\cdots$&$\bar{a}_{1s}$\\
$\vdots$& $\vdots$ & $\vdots$&$\vdots$\\
 $\bar{c}_s$ &$I$&  $\bar{a}_{s1}$& $\cdots$& $\bar{a}_{ss}$\\
 \hline
 $\raisebox{-1.3ex}[1.0pt]{$e^{hM}$}$&$\raisebox{-1.3ex}[1.0pt]{$w_s(hM)$}$&$\raisebox{-1.3ex}[1.0pt]{$\bar{b}_1$}$&\raisebox{-1.3ex}[1.0pt]{$\cdots$} &  $\raisebox{-1.3ex}[1.0pt]{$\bar{b}_s$}$\\
\end{tabular}
\end{aligned}
\end{equation}
with $\bar{c}_i=\sum\limits_{j=1}^s\bar{a}_{ij}$. Explicitly, the
method \eqref{MVERK} utilises the matrix exponential of $M$ and a
related function, and hence its name ``exponential integrators''.
Moreover, since the computational cost of the product of a matrix
exponential function with a vector is expensive, the internal stages
of the method \eqref{MVERK} avoid matrix exponentials, namely, the
update makes use of the matrix exponential of $M$ only once at each
step, and hence its name ``modified version of exponential
integrators''.

According to the definition of MVERK methods, it is clear that MVERK
methods can be thought of as a generalization of standard RK
methods, but the most important aspect is that MVERK methods are
specially designed for efficiently solving \eqref{stiffODEs}. In
fact, when $M\rightarrow 0$, $w_s(hM)=0$, and  then the MVERK method
\eqref{MVERK} reduces to the standard RK method:
\begin{equation*}
\left\{
\begin{aligned}
& Y_i=y_0 + h\sum_{j=1}^{s}\bar{a}_{ij}f(Y_j), \quad i=1,\ldots,s,
\\
& y_{1}=y_0+ h\sum_{i=1}^{s}\bar{b}_{i}f(Y_i).
\end{aligned}\right.
\end{equation*}

In what follows, we will present some examples of explicit MVERK
methods. As the first example of explicit MVERK methods, we consider
the special case of the one-stage explicit MVERK method with
$w_1(hM)=0$:
\begin{equation}\label{MVERK-one-stage}
\left\{
\begin{aligned}
& Y_1=y_0,
\\
& y_{1}=e^{hM}y_0+ h\bar{b}_{1}f(Y_1).
\end{aligned}\right.
\end{equation}

We will compare the Taylor series of the  numerical solution $y_{1}$
with the Taylor series of the exact solution $y(h)$ under the
assumption $y(0) = y_0$. An MVERK method whose series when expanded
about $y_0$ agrees with that of the exact solution up to the term in
$h^p$ is said to be of order $p$. The series for the numerical
solution involve the same derivatives as for the exact solution but
have coefficients that depend on the method. The resulting
conditions on these coefficients are called the order conditions.

The Taylor series for the exact solution is given by
\begin{align*}
&y(h) = y(0) + hy'(0) + \frac{h^2}{2!}y''(0) + \frac{h^3}{3!}y'''(0) + \frac{h^4}{4!}y^{(4)}(0) + \cdots \\
&= y(0) + h(My(0)+f(y(0))) + \frac{h^2}{2}(My'(0)+f'_{y}(y(0))y'(0))\\&\qquad\qquad+\frac{h^3}{3!}y'''(0) + \frac{h^4}{4!}y^{(4)}(0) + \cdots\\
&= y(0)+h(My(0)+f(y(0))) +
\frac{h^2}{2}(M(My(0)+f(y(0)))\\&\qquad\qquad+f'_{y}(y(0))(My(0)+f(y(0))))+\frac{h^3}{3!}y'''(0)
+ \frac{h^4}{4!}y^{(4)}(0) + \cdots.
\end{align*}
The Taylor series for the exact solution is to be compared with the
Taylor series for the numerical solution. First, we regard the
internal stage vector $Y(h)$ as a function of $h$ and compute
derivatives with respect to $h$, getting
\begin{align*}
Y_1 &= y_0, \\
y_1 &=e^{hM}y_0+h\bar{b}_{1}f(Y_1)\\
&=e^{hM}y_0+h\bar{b}_{1}f( y_0)\\
&=(I+hM)y_0+h\bar{b}_{1}f(y_0)+\mathcal{O}(h^2),\\
y(h)&=y_0+hy'(0)+\mathcal{O}(h^2)=y_0+h(My_0+f(y_0))+\mathcal{O}(h^2)
\end{align*}
If we consider the underling one-stage MVERK method is of order one,
we then obtain $\bar{b}_1=1$. This gives the following first-order
explicit MVERK method with one stage
\begin{equation}\label{MVERK-one-stage-Euler}
y_1=e^{hM}y_0+hf(y_0),
\end{equation}
which can be expressed in the Butcher tableau
\begin{equation}\label{}
\begin{aligned}
\begin{tabular}{c|c|c}
 $0$&$I$&$0$\\
\hline
  $\raisebox{-1.3ex}[1.0pt]{$e^{hM}$}$&$\raisebox{-1.3ex}[1.0pt]{$0$}$&$\raisebox{-1.3ex}[1.0pt]{$1$}$\\
\end{tabular}~.
\end{aligned}
\end{equation}
The first-order explicit MVERK method with one stage is also termed
the modified exponential Euler method, which is different from the
exponential Euler method as it stands (see, e.g. Acta Numer. (2010)
by Hochbruck et al.)
\begin{equation}\label{Pro-exponential-Euler}
\begin{aligned}
y_1=e^{hM}y_0+h\varphi_1(hM)f(y_0),
\end{aligned}
\end{equation}
where
\begin{equation}\label{varphi1}
\begin{aligned}
\varphi_1(z)&=\frac{e^z-1}{z}.
\end{aligned}
\end{equation}
They would be same if $\varphi_1(z)$ were replaced by $1$. Here, it
is worth mentioning that the exponential Euler method
\eqref{Pro-exponential-Euler} is very popular, which is the
prototype exponential method appeared repeatedly in the literature.

It is noted that when $M\rightarrow 0$, the modified  exponential
Euler method \eqref{MVERK-one-stage-Euler} reduces to the well-known
explicit Euler method for $y'=f(y)$.

As the second example of MVERK methods, we then consider two-stage
explicit MVERK methods with $w_2(hM)=\frac{h^2}{2}Mf(y_0)$:
\begin{equation}\label{MVERK-two-stage}
\left\{
\begin{aligned}
& Y_1=y_0, \\
& Y_2=y_0+h\bar{a}_{21}(My_0+f(y_0)),
\\
& y_{1}=e^{hM}y_0+
h(\bar{b}_{1}f(Y_1)+\bar{b}_{2}f(Y_2))+\frac{h^2}{2}Mf(y_0).
\end{aligned}\right.
\end{equation}
Considering the second-order explicit MVERK method with two stages
yields
\begin{equation}\label{MVERK-two-OC}
\begin{aligned}
\includegraphics{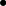}\qquad\qquad&\bar{b}_1+\bar{b}_2&=1, \\
\includegraphics{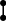}\qquad\qquad&2\bar{a}_{21}\bar{b}_2&=1.\\
\end{aligned}
\end{equation}

\noindent Let $\bar{a}_{21}\neq 0$ be real parameter. We obtain a
one-parameter family of second-order explicit MVERK method:

\begin{align*}
\bar{b}_2&=\frac{1}{2\bar{a}_{21}},\ \ \ \
\bar{b}_1=1-\frac{1}{2\bar{a}_{21}},\quad \bar{a}_{21}\neq 0.
\end{align*}
The choice of $\bar{a}_{21}=1$ gives
$\bar{b}_1=\bar{b}_2=\frac{1}{2}$. This suggests the following
second-order explicit MVERK method with two stages
\begin{equation}\label{MVERK-two-stage-1}
\left\{
\begin{aligned}
& Y_1=y_0, \\
& Y_2=y_0+h(My_0+f(y_0)),
\\
& y_{1}=e^{hM}y_0+ \frac{h}{2}((I+Mh)f(Y_1)+f(Y_2)),
\end{aligned}\right.
\end{equation}
which can be denoted by the Butcher tableau

\begin{equation}\label{}
\begin{aligned}
\begin{tabular}{c|c|cc}
 $0$&$I$&$0$&$0$\\
$1$ &$I$&  $1$& $0$\\
 \hline
 $\raisebox{-1.3ex}[1.0pt]{$e^{hM}$}$ &$\raisebox{-1.3ex}[1.0pt]{$w_2(hM)$}$&$\raisebox{-1.3ex}[1.0pt]{$\frac{1}{2}$}$&  $\raisebox{-1.3ex}[1.0pt]{$\frac{1}{2}$}$\\
\end{tabular}~.
\end{aligned}
\end{equation}

The choice of $\bar{a}_{21}=\frac{1}{2}$ delivers $\bar{b}_1=0$ and
$\bar{b}_2=1$. This leads to another second-order explicit MVERK
method with two stages
\begin{equation}\label{MVERK-two-stage-2}
\left\{
\begin{aligned}
& Y_1=y_0, \\
& Y_2=y_0+\frac{h}{2}(My_0+f(y_0)),
\\
& y_{1}=e^{hM}y_0+ h(f(Y_2)+\frac{h}{2}Mf(Y_1)),
\end{aligned}\right.
\end{equation}
which can be presented by the Butcher tableau
\begin{equation}\label{}
\begin{aligned}
\begin{tabular}{c|c|cc}
 $0$&$I$&$0$&$0$\\
$\frac{1}{2}$ &$I$&  $\frac{1}{2}$& $0$\\
 \hline
 $\raisebox{-1.3ex}[1.0pt]{$e^{hM}$}$ &$\raisebox{-1.3ex}[1.0pt]{$w_2(hM)$}$&$\raisebox{-1.3ex}[1.0pt]{$0$}$&  $\raisebox{-1.3ex}[1.0pt]{$1$}$\\
\end{tabular}~.
\end{aligned}
\end{equation}

It is noted that when  $M\rightarrow 0$, the second-order explicit
MVERK methods \eqref{MVERK-two-stage-1} and
\eqref{MVERK-two-stage-2}
 with two stages reduce to the second-order RK method by Heun,  and the so-called modified Euler method by Runge, respectively.

In what follows we  consider three-stage explicit MVERK methods of
order three with
$w_3(hM)=\frac{1}{6}h^2M(3f(y_0)+h(Mf(y_0)+f'_y(y_0)(My_0+f(y_0)))):$

\begin{equation}\label{MVERK-three-stage}
\left\{
\begin{aligned}
& Y_1=y_0, \\
& Y_2=y_0+h\bar{a}_{21}(My_0+f(y_0)),\\
& Y_3=y_0+h(\bar{a}_{31}(MY_1+f(Y_1))+\bar{a}_{32}(MY_2+f(Y_2)))\\
& y_{1}=e^{hM}y_0+
h(\bar{b}_1f(Y_1)+\bar{b}_{2}f(Y_2)+\bar{b}_3f(Y_3))\\
&\qquad+\frac{1}{6}h^2M(3f(y_0)+h(Mf(y_0)+f'_y(y_0)(My_0+f(y_0)))).
\end{aligned}\right.
\end{equation}
The order conditions for the three-order explicit MVERK methods with
three stages are given by
\begin{equation}\label{MVERK-three-OC}
\begin{aligned}
\includegraphics{t1.eps}\qquad\qquad&\bar{b}_1+\bar{b}_2+\bar{b}_3&=1,\\
\includegraphics{t2.eps}\qquad\qquad&\bar{b}_2\bar{a}_{21}+\bar{b}_3(\bar{a}_{31}+\bar{a}_{32})&=\frac{1}{2},\\
\includegraphics{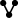}\qquad\qquad&\bar{b}_2\bar{a}_{21}^2+\bar{b}_3(\bar{a}_{31}+\bar{a}_{32})^2&=\frac{1}{3},\\
\includegraphics{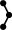}\qquad\qquad&\bar{b}_3\bar{a}_{32}\bar{a}_{21}&=\frac{1}{6}.\\
\end{aligned}
\end{equation}
The system \eqref{MVERK-three-OC} has infinitely many solutions, and
we refer the reader to Chap. 3 in \cite{Butcherbook2008} for
details. It can be verified that $\bar{b}_2=\bar{a}_{31}=0,\
\bar{a}_{21}=\frac{1}{3},\ \bar{a}_{32}=\frac{2}{3}$,
$\bar{b}_1=\frac{1}{4}$, and $\bar{b}_3=\frac{3}{4}$ satisfy the
order conditions stated above. Accordingly, we obtain the
three-order explicit MVERK method with three stages as follows:

\begin{equation}\label{MVERK-three-stage-1}
\left\{
\begin{aligned}
& Y_1=y_0, \\
& Y_2=y_0+\frac{1}{3}hg_0,\\
& Y_3=y_0+\frac{2}{3}h(MY_2+f(Y_2))\\
& y_{1}=e^{hM}y_0+ \frac{1}{4}h(f(Y_1)+3f(Y3))\\&\qquad\qquad\ \
+\frac{1}{6}h^2M(3f(y_0)+h(Mf(y_0)+f'_y(y_0)g_0)),
\end{aligned}\right.
\end{equation}
where
\begin{equation*}
\begin{aligned}
g_0&=My_0+f(y_0).
\end{aligned}
\end{equation*}
The MVERK method \eqref{MVERK-three-stage-1} can be expressed in the
Butcher tableau
\begin{equation}\label{}
\begin{aligned}
\begin{tabular}{c|c|ccc}
$0$ &$I$&$0$&&\\
$\raisebox{-1.3ex}[1.0pt]{$\frac{1}{3}$}$& $\raisebox{-1.3ex}[1.0pt]{$I$}$ &$\raisebox{-1.3ex}[1.0pt]{$\frac{1}{3}$}$ &$\raisebox{-1.3ex}[1.0pt]{$0$}$\\
 $\raisebox{-1.3ex}[1.0pt]{$\frac{2}{3}$}$ &$\raisebox{-1.3ex}[1.0pt]{$I$}$&  $\raisebox{-1.3ex}[1.0pt]{$0$}$& $\raisebox{-1.3ex}[1.0pt]{$\frac{2}{3}$}$& $\raisebox{-1.3ex}[1.0pt]{$0$}$\\
 \hline
 $\raisebox{-1.3ex}[1.0pt]{$e^{hM}$}$ &$\raisebox{-1.3ex}[1.0pt]{$w_3(hM)$}$&$\raisebox{-1.3ex}[1.0pt]{$\frac{1}{4}$}$&\raisebox{-1.3ex}[1.0pt]{$0$} &  $\raisebox{-1.3ex}[1.0pt]{$\frac{3}{4}$}$\\
\end{tabular}~.
\end{aligned}
\end{equation}
When $M\rightarrow 0$, the MVERK method \eqref{MVERK-three-stage-1}
reduces to Heun's three-order RK method. It is true that
\eqref{MVERK-three-stage-1} uses the Jacobian matrix of $f(y)$ with
respect to $y$ at each step. However, as is known, an A-stable RK
method is implicit, and  the Newton--Raphson iteration is required
when applied to stiff systems. This implies that an A-stable RK
method when applied to stiff systems depends on the evaluation of
Jacobian matrix of $f(y)$ with respect to $y$ at each step as well.
As we have emphasised in Introduction that implicit exponential
integrators need to use the evaluation of Jacobian matrix of $f(y)$
with respect to $y$, depending on an iterative procedure due to the
stiffness of \eqref{stiffODEs}. On the other hand, the computational
cost of explicit exponential integrators appeared in the literature
depends on evaluations of matrix exponentials heavily. If the cost
of computing the Jacobian matrix of $f(y)$ with respect to $y$ for
the underlying system \eqref{stiffODEs} is cheaper than that of the
evaluations of matrix exponentials for the explicit exponential
integrators appeared in the literature, we are also hopeful of
obtaining the high efficiency of our three-order explicit
exponential integrators with three stages.

 Also, it can be verified that $\bar{a}_{21}=\frac{1}{2},\ \bar{a}_{31}=0,\ \bar{a}_{32}=\frac{3}{4}$, $\bar{b}_1=\frac{2}{9},\ \bar{b}_2=
\frac{3}{9}$ and $\bar{b}_3=\frac{4}{9}$ satisfy the order
conditions. Consequently, we obtain another three-order explicit
MVERK method with three stages as follows:

\begin{equation}\label{MVERK-three-stage-2}
\left\{
\begin{aligned}
& Y_1=y_0, \\
& Y_2=y_0+\frac{1}{2}hg_0,\\
& Y_3=y_0+\frac{3}{4}h(MY_2+f(Y_2))\\
& y_{1}=e^{hM}y_0+
\frac{1}{9}h(2f(Y_1)+3f(Y_2)+4f(Y3))\\&\qquad\qquad\ \
+\frac{1}{6}h^2M(3f(y_0)+h(Mf(y_0)+f'_y(y_0)g_0)),
\end{aligned}\right.
\end{equation}
where
\begin{equation*}
\begin{aligned}
g_0&=My_0+f(y_0).
\end{aligned}
\end{equation*}
The MVERK method \eqref{MVERK-three-stage-2} can be denoted by the
Butcher tableau
\begin{equation}\label{}
\begin{aligned}
\begin{tabular}{c|c|ccc}
$0$ &$I$&$0$&&\\
$\raisebox{-1.3ex}[1.0pt]{$\frac{1}{2}$}$& $\raisebox{-1.3ex}[1.0pt]{$I$}$ &$\raisebox{-1.3ex}[1.0pt]{$\frac{1}{2}$}$ &$\raisebox{-1.3ex}[1.0pt]{$0$}$\\
 $\raisebox{-1.3ex}[1.0pt]{$\frac{3}{4}$}$ &$\raisebox{-1.3ex}[1.0pt]{$I$}$&  $\raisebox{-1.3ex}[1.0pt]{$0$}$& $\raisebox{-1.3ex}[1.0pt]{$\frac{3}{4}$}$& $\raisebox{-1.3ex}[1.0pt]{$0$}$\\
 \hline
  $\raisebox{-1.3ex}[1.0pt]{$e^{hM}$}$&$\raisebox{-1.3ex}[1.0pt]{$w_3(hM)$}$&$\raisebox{-1.3ex}[1.0pt]{$\frac{2}{9}$}$&\raisebox{-1.3ex}[1.0pt]{$\frac{3}{9}$} &  $\raisebox{-1.3ex}[1.0pt]{$\frac{3}{4}$}$\\
\end{tabular}
\end{aligned}~.
\end{equation}

When $M\rightarrow 0$, the MVERK method \eqref{MVERK-three-stage-2}
reduces to the classical third-order RK method with three stages.

\section{A simplified version of ERK methods}\label{sec:SVERKmethod}
Following the idea stated in the previous section, in this section
we will consider a simplified version of ERK methods, which also
allows internal stages to use matrix exponentials of $hM$ to some
extent.

Differently from MVERK methods, here the internal stages make use of
matrix exponentials of $hM$, but all the coefficients of the
simplified version are independent of matrix
exponentials.\vskip0.3cm

\begin{definition} An $s$-stage simplified version of ERK (SVERK) method
applied with stepsize $h>0$ for solving \eqref{stiffODEs} is defined
by
\begin{equation}\label{SVERK}
\left\{
\begin{aligned}
& Y_i=e^{\bar{c}_ihM}y_0 + h\sum_{j=1}^{s}\bar{a}_{ij}f(Y_j), \quad
i=1,\ldots,s,
\\
& y_{1}=e^{hM}y_0+
h\sum_{i=1}^{s}\bar{b}_{i}f(Y_i)+\widetilde{w}_s(hM),
\end{aligned}\right.
\end{equation}
where $\bar{a}_{ij}$ and $\bar{b}_{i}$ are real constants,
$\bar{c}_i=\sum\limits_{j=1}^s\bar{a}_{ij}$, $\widetilde{w}_s(hM)$
is a suitable matrix-valued function of $hM$ (or zero), and in
particular, $\widetilde{w}_s(hM)=0$ when $M\rightarrow 0$.
\end{definition}
Likewise, differently from the standard ERK methods, SVERK methods
are dependent on a matrix-valued function $\widetilde{w}_s(hM)$.
Here $\widetilde{w}_s(hM)$  appearing in \eqref{SVERK} is
independent of matrix-valued exponentials, and will change with the
order $p$ of the underlying SVERK method. However, the SVERK methods
with same order $p$ share the same $\widetilde{w}_s(hM)$. The choice
of $\widetilde{w}_s(hM)$ relies heavily on the order conditions,
 {which must} coincide with the order conditions for the standard
RK methods. Obviously, our SVERK methods exactly integrate
\eqref{homog} as well.

The method \eqref{SVERK} can be represented briefly in
 Butcher's notation by the following block tableau of coefficients:
\begin{equation}\label{Tableau-SVERK}
\begin{aligned} &\quad\quad\begin{tabular}{c|c|c}
 $\bar{c}$&$\mathbf{e}^{\bar{c}hM}$&$\bar{A}$ \\
 \hline
  $\raisebox{-1.3ex}[1.0pt]{$e^{hM}$}$ & $\raisebox{-1.3ex}[1.0pt]{$\widetilde{w}_s(hM)$}$&$\raisebox{-1.3ex}[1.0pt]{$\bar{b}^{\intercal}$}$  \\
\end{tabular}~=
\begin{tabular}{c|c|ccc}
 $\bar{c}_1$&$e^{\bar{c_1}hM}$&$\bar{a}_{11}$&$\cdots$&$\bar{a}_{1s}$\\
$\vdots$& $\vdots$ & $\vdots$&$\vdots$\\
 $\bar{c}_s$ &$e^{\bar{c}_shM}$&  $\bar{a}_{s1}$& $\cdots$& $\bar{a}_{ss}$\\
 \hline
 $\raisebox{-1.3ex}[1.0pt]{$e^{hM}$}$ &$\raisebox{-1.3ex}[1.0pt]{$\widetilde{w}_s(hM)$}$&$\raisebox{-1.3ex}[1.0pt]{$\bar{b}_1$}$&\raisebox{-1.3ex}[1.0pt]{$\cdots$} &  $\raisebox{-1.3ex}[1.0pt]{$\bar{b}_s$}$\\
\end{tabular}~.
\end{aligned}
\end{equation}

We first consider the explicit one-stage SVERK method of order one
with $\widetilde{w}_1(hM)=0$. It is easy to see that $\bar{b}_1=1$.
This implies the following first-order explicit SVERK method with
one stage:
\begin{equation}\label{SVERK-one-stage-Euler}
y_1=e^{hM}y_0+hf(y_0),
\end{equation}
which is identical to \eqref{MVERK-one-stage-Euler}.

We next consider second-order explicit SVERK methods with
$\widetilde{w}_2(hM)=\frac{h^2}{2}Mf(y_0)$:
\begin{equation}\label{SVERK-two-stage}
\left\{
\begin{aligned}
& Y_1=y_0, \\
& Y_2=e^{\bar{c}_2hM}y_0+h\bar{a}_{21}f(y_0),
\\
& y_{1}=e^{hM}y_0+
h(\bar{b}_{1}f(Y_1)+\bar{b}_{2}f(Y_2))+\frac{h^2}{2}Mf(y_0).
\end{aligned}\right.
\end{equation}
Considering the second-order SVERK method with two stages yields
\begin{align*}
\bar{c}_2&=\bar{a}_{21},\ \ \ \
\bar{b}_1+\bar{b}_2=1,\ \ \ \
2\bar{a}_{21}\bar{b}_2=1.
\end{align*}
This is the same as \eqref{MVERK-two-OC} of the order conditions for
MVERK methods. Let $\bar{a}_{21}\neq 0$ be real parameter. We obtain
a one-parameter family of second-order explicit SVERK method:
\begin{align*}
\bar{b}_2&=\frac{1}{2\bar{a}_{21}},\ \ \ \
\bar{b}_1=1-\frac{1}{2\bar{a}_{21}},\quad \bar{a}_{21}\neq 0.
\end{align*}
The choice of $\bar{a}_{21}=1$ gets
$\bar{b}_1=\bar{b}_2=\frac{1}{2}$. This results in the following
second-order SVERK method with two stages:

\begin{equation}\label{SVERK-two-stage-1}
\left\{
\begin{aligned}
& Y_1=y_0, \\
& Y_2=e^{hM}y_0+hf(y_0),
\\
& y_{1}=e^{hM}y_0+ \frac{h}{2}(f(Y_1)+f(Y_2))+\frac{h^2}{2}Mf(y_0),
\end{aligned}\right.
\end{equation}
which can be denoted by the Butcher tableau

\begin{equation}\label{}
\begin{aligned}
\begin{tabular}{c|c|cc}
 $0$&$I$&$0$&$0$\\
$1$ &$e^{hM}$&  $1$& $0$\\
 \hline
 $\raisebox{-1.3ex}[1.0pt]{$e^{hM}$}$ &$\raisebox{-1.3ex}[1.0pt]{$\widetilde{w}_2(hM)$}$&$\raisebox{-1.3ex}[1.0pt]{$\frac{1}{2}$}$&  $\raisebox{-1.3ex}[1.0pt]{$\frac{1}{2}$}$\\
\end{tabular}~.
\end{aligned}
\end{equation}

The choice of $\bar{a}_{21}=\frac{1}{2}$ yields $\bar{b}_1=0$ and
$\bar{b}_2=1$. This arrives at another second-order explicit SVERK
method with two stages
\begin{equation}\label{SVERK-two-stage-2}
\left\{
\begin{aligned}
& Y_1=y_0, \\
& Y_2=e^{\frac{h}{2}M}y_0+\frac{h}{2}f(y_0),
\\
& y_{1}=e^{hM}y_0+
hf(Y_2)+\frac{h^2}{2}Mf(y_0),\\
\end{aligned}\right.
\end{equation}
which can be expressed in the Butcher tableau

\begin{equation}\label{}
\begin{aligned}
\begin{tabular}{c|c|cc}
 $0$&$I$&$0$&$0$\\
$\frac{1}{2}$ &$e^{\frac{h}{2}M}$&  $\frac{1}{2}$& $0$\\
 \hline
  $\raisebox{-1.3ex}[1.0pt]{$e^{hM}$}$&$\raisebox{-1.3ex}[1.0pt]{$\widetilde{w}_2(hM)$}$&$\raisebox{-1.3ex}[1.0pt]{$0$}$&  $\raisebox{-1.3ex}[1.0pt]{$1$}$\\
\end{tabular}~.
\end{aligned}
\end{equation} It is noted that when $M\rightarrow 0$ and
$hM\rightarrow 0$, the 2-stage SVERK methods
\eqref{SVERK-two-stage-1} and \eqref{SVERK-two-stage-2}
 of order two reduce to the well-known
explicit RK method of order two by Heun and the so-called modified
Euler method, respectively.

Likewise, we derive three-stage explicit SVERK methods with $
\widetilde{w}_3(hM)=\frac{h^2}{2}Mf(y_0)+\frac{1}{6}h^3((M+f'_y(y_0))Mf(y_0)+Mf'_y(y_0)(My_0+f(y_0))):
$
\begin{equation}\label{SVERK-three-stage}
\left\{
\begin{aligned}
& Y_1=y_0, \\
& Y_2=e^{\bar{c}_2hM}y_0+h\bar{a}_{21}f(Y_1),
\\
& Y_3=e^{\bar{c}_3hM}y_0+h(\bar{a}_{31}f(Y_1)+\bar{a}_{32}f(Y_2)),\\
& y_{1}=e^{hM}y_0+
h(\bar{b}_{1}f(Y_1)+\bar{b}_{2}f(Y_2)+\bar{b}_3f(Y_3))+\frac{h^2}{2}Mf(y_0)\\&\qquad+\frac{1}{6}h^3((M+f'_y(y_0))Mf(y_0)+Mf'_y(y_0)(My_0+f(y_0))).
\end{aligned}\right.
\end{equation}
The order conditions for 3-stage SVERK methods of order three are
the same as \eqref{MVERK-three-OC}.\vskip0.5cm

We choose $\bar{a}_{21}=\frac{1}{2},\ \bar{a}_{31}=0,\
\bar{a}_{32}=\frac{3}{4},\ \bar{b}_{1}=\frac{2}{9},\
\bar{b}_{2}=\frac{3}{9}$ and $\bar{b}_{3}=\frac{4}{9}$. It can be
verified that this choice satisfies the order conditions
\eqref{MVERK-three-OC}. Hence, we have following third-order
explicit SVERK methods with three stages:

\begin{equation}\label{SVERK-three-stage-1}
\left\{
\begin{aligned}
& Y_1=y_0, \\
& Y_2=e^{\frac{h}{2}M}y_0+\frac{h}{2}f(y_0),
\\
& Y_3=e^{\frac{3}{4}hM}y_0+\frac{3}{4}hf(Y_2),\\
& y_{1}=e^{hM}y_0+
\frac{h}{9}(2f(Y_1)+3f(Y_2)+4f(Y_3))+\frac{h^2}{2}Mf(y_0)\\&\qquad+\frac{1}{6}h^3((M+f'_y(y_0))Mf(y_0)+Mf'_y(y_0)(My_0+f(y_0))).
\end{aligned}\right.
\end{equation}
The SVERK method \eqref{SVERK-three-stage-1} can be expressed in the
Butcher tableau
\begin{equation}\label{}
\begin{aligned}
\begin{tabular}{c|c|ccc}
$0$ &$I$&$0$&&\\
$\raisebox{-1.3ex}[1.0pt]{$\frac{1}{2}$}$& $\raisebox{-1.3ex}[1.0pt]{$e^{\frac{h}{2}M}$}$ &$\raisebox{-1.3ex}[1.0pt]{$\frac{1}{2}$}$ &$\raisebox{-1.3ex}[1.0pt]{$0$}$\\
$\raisebox{-1.3ex}[1.0pt]{ $\frac{3}{4}$}$ &$\raisebox{-1.3ex}[1.0pt]{$e^{\frac{3}{4}hM}$}$&  $\raisebox{-1.3ex}[1.0pt]{$0$}$& $\raisebox{-1.3ex}[1.0pt]{$\frac{3}{4}$}$& $\raisebox{-1.3ex}[1.0pt]{$0$}$\\
 \hline
  $\raisebox{-1.3ex}[1.0pt]{$e^{hM}$}$&$\raisebox{-1.3ex}[1.0pt]{$\widetilde{w}_3(hM)$}$&$\raisebox{-1.3ex}[1.0pt]{$\frac{2}{9}$}$&\raisebox{-1.3ex}[1.0pt]{$\frac{3}{9}$} &  $\raisebox{-1.3ex}[1.0pt]{$\frac{4}{9}$}$\\
\end{tabular}~.
\end{aligned}
\end{equation}
Another option is that $\bar{a}_{21}=\frac{1}{3},\ \bar{a}_{31}=0,\
\bar{a}_{32}=\frac{2}{3},\ \bar{b}_{1}=\frac{1}{4},\ \bar{b}_{2}=0$
and $\bar{b}_{3}=\frac{3}{4}$. It is easy to see that this choice
satisfies the order conditions \eqref{MVERK-three-OC}. Thus, we
obtain the third-order explicit SVERK methods with three stages as
follows:

\begin{equation}\label{SVERK-three-stage-2}
\left\{
\begin{aligned}
& Y_1=y_0, \\
& Y_2=e^{\frac{1}{3}hM}y_0+\frac{1}{3}hf(y_0),
\\
& Y_3=e^{\frac{2}{3}hM}y_0+\frac{2}{3}hf(Y_2),\\
& y_{1}=e^{hM}y_0+
\frac{h}{4}(f(Y_1)+3f(Y_3))+\frac{h^2}{2}Mf(y_0)\\&\qquad+\frac{1}{6}h^3((M+f'_y(y_0))Mf(y_0)+Mf'_y(y_0)(My_0+f(y_0))),
\end{aligned}\right.
\end{equation}
which can be denoted by the Butcher tableau
\begin{equation}\label{}
\begin{aligned}
\begin{tabular}{c|c|ccc}
$0$ &$I$&$0$&&\\
$\raisebox{-1.3ex}[1.0pt]{$\frac{1}{3}$}$& $\raisebox{-1.3ex}[1.0pt]{$e^{\frac{h}{3}M}$}$ &$\raisebox{-1.3ex}[1.0pt]{$\frac{1}{3}$}$ &$\raisebox{-1.3ex}[1.0pt]{$0$}$\\
$\raisebox{-1.3ex}[1.0pt]{ $\frac{2}{3}$}$ &$\raisebox{-1.3ex}[1.0pt]{$e^{\frac{2}{3}hM}$}$&  $\raisebox{-1.3ex}[1.0pt]{$0$}$& $\raisebox{-1.3ex}[1.0pt]{$\frac{2}{3}$}$&$\raisebox{-1.3ex}[1.0pt]{ $0$}$\\
 \hline
 $\raisebox{-1.3ex}[1.0pt]{$e^{hM}$}$ &$\raisebox{-1.3ex}[1.0pt]{$\widetilde{w}_3(hM)$}$&$\raisebox{-1.3ex}[1.0pt]{$\frac{1}{4}$}$&\raisebox{-1.3ex}[1.0pt]{$0$} &  $\raisebox{-1.3ex}[1.0pt]{$\frac{3}{4}$}$\\
\end{tabular}~.
\end{aligned}
\end{equation}
When $M\rightarrow 0$, the third-order explicit  SVERK methods
\eqref{SVERK-three-stage-1} and \eqref{SVERK-three-stage-2}
 with three stages reduce to the classical
explicit third-order RK method   and the well-known Heun's method of
order three, respectively.

\section{Analysis issues}\label{sec:AnalysisERKmethod}

In this section, we aim at some analytical aspects associated with
our new explicit ERK methods.

\begin{theorem}\label{A-Stab} All the new exponential integrators
presented in this paper are $A$-stable in the sense of Dahlquist
(see \cite{DahlquistBIT1963}).
\end{theorem}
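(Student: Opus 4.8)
The plan is to reduce every method constructed above to its action on Dahlquist's scalar test equation $y'=\lambda y$ with $\mathrm{Re}(\lambda)<0$, and to show that in each case the one-step map has stability function $R(z)=e^{z}$ with $z=h\lambda$. Recall that $A$-stability in the sense of \cite{DahlquistBIT1963} demands $|R(z)|\le 1$ for all $z$ with $\mathrm{Re}(z)\le 0$ and all $h>0$; since $|e^{z}|=e^{\mathrm{Re}(z)}$, the conclusion is then immediate, and in fact $R(z)\to 0$ as $\mathrm{Re}(z)\to-\infty$, so the methods are even $L$-stable in this scalar sense.

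First I would specialise the general schemes \eqref{MVERK} and \eqref{SVERK} to the test problem by taking the $1\times 1$ ``matrix'' $M=\lambda$ and $f\equiv 0$; this is the natural identification, because $y'=\lambda y$ is precisely of the homogeneous linear form \eqref{homog} that all of these integrators integrate exactly. The key structural observation is that each correction term --- $w_s(hM)$ in the MVERK family, $\widetilde w_s(hM)$ in the SVERK family --- is, in every concrete instance exhibited in Sections~\ref{sec:CRERKmethod} and \ref{sec:SVERKmethod}, a polynomial in $hM$ every monomial of which carries a factor $f(y_0)$ or a derivative $f'_y(y_0)$: one sees this directly from $w_1=0$, $w_2(hM)=\tfrac{h^2}{2}Mf(y_0)$, the cubic $w_3$ used in \eqref{MVERK-three-stage}, and their SVERK counterparts. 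Hence, with $f\equiv 0$, all these terms vanish.

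Then I would trace the stages. For an MVERK method with $f\equiv 0$ the internal relations force $Y_i=y_0$ for every $i$, so $f(Y_i)=0$; for an SVERK method they give $Y_i=e^{\bar c_i h\lambda}y_0$, and again $f(Y_i)=0$. In both cases the update collapses to $y_1=e^{h\lambda}y_0$, whence $R(z)=e^{z}$. Since $\mathrm{Re}(z)=h\,\mathrm{Re}(\lambda)<0$ for every $h>0$, we obtain $|y_1|=e^{\mathrm{Re}(z)}|y_0|<|y_0|$, which is exactly the $A$-stability assertion.

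The only point needing care --- and the nearest thing to an obstacle --- is the structural claim that the nonlinear correction functions $w_s,\widetilde w_s$ are genuinely built solely from $f$ and its derivatives at $y_0$, so that they disappear on the linear test equation; this is not part of the abstract Definition (which only imposes $w_s\to 0$ as $M\to 0$), so I would verify it case by case against the displayed formulas for the one-, two-, and three-stage methods. If a fully self-contained argument is preferred, one may instead invoke the fact, noted already in Section~\ref{sec:CRERKmethod}, that each method integrates \eqref{homog} exactly; on $y'=\lambda y$ exactness means $y_1=e^{h\lambda}y_0$, which yields $R(z)=e^{z}$ without inspecting the $w_s$ at all.
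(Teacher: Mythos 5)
Your proposal is correct and follows essentially the same route as the paper: specialise to the Dahlquist test equation $y'=\lambda y$ (i.e.\ $M=\lambda$, $f\equiv 0$), use the fact that the integrators reproduce $e^{h\lambda}$ exactly on the homogeneous problem, and conclude $|y_n|=|e^{h\lambda}|^n|y_0|\to 0$ for $\mathrm{Re}(\lambda)<0$. Your case-by-case verification that the correction terms $w_s$, $\widetilde w_s$ vanish when $f\equiv 0$ is a worthwhile extra detail that the paper's one-line proof leaves implicit.
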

\begin{proof}
It is easy to see that when applied to the initial value problem
$y'=\lambda y, \quad y(0)=y_0$, these new exponential integrators
with stepsize $h$ generate the approximate solution
$y_n=(e^{h\lambda})^{n}y_0$. If $\lambda$ is a complex scalar with
negative real part, then $\lim\limits_{n\rightarrow\infty}y_n=0$,
establishing $A$-stability in the sense of Dahlquist. \hfill
$\square$
\end{proof}
\begin{theorem}\label{A-Stab-23} If
$\bar{c}=(\bar{c}_1,\cdots,\bar{c}_s),~\bar{b}=(\bar{b_1},\cdots,b_s)$
and $\bar{A}=(\bar{a}_{ij})$ for $s=1,2,3$ are coefficients of a
standard explicit RK method of order $s$, then the explicit MVERK
and SVERKN methods with the same nodes $\bar{c}$, weights $\bar{b}$
and coefficients $\bar{A}$ are also of order $s$ when applied to
\eqref{stiffODEs}. Moreover, all of them are $A$-stable in the sense
of Dahlquist.
\end{theorem}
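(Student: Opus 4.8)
The plan is to treat the two assertions of the theorem separately, starting with the easy one. For the $A$-stability claim no new work is needed: every MVERK method \eqref{MVERK} and every SVERK method \eqref{SVERK} integrates the homogeneous linear equation \eqref{homog} exactly, irrespective of its coefficients, so when applied to the Dahlquist test equation $y'=\lambda y$ with stepsize $h$ it produces $y_n=(e^{h\lambda})^n y_0$, and $\mathrm{Re}(\lambda)<0$ forces $y_n\to 0$. This is exactly the argument used in the proof of Theorem~\ref{A-Stab}, which therefore applies verbatim to the methods in the present statement. Hence it remains only to establish the order assertion for $s=1,2,3$.

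For the MVERK case I would argue by comparison with the classical $s$-stage explicit RK method with coefficients $(\bar c,\bar b,\bar A)$ applied to the \emph{augmented} system $y'=F(y):=My+f(y)$. Writing $\tilde Y_i$ and $\tilde y_1$ for its internal stages and update, one has $\tilde Y_i=y_0+h\sum_j\bar a_{ij}F(\tilde Y_j)$ and $\tilde y_1=y_0+h\sum_i\bar b_iF(\tilde Y_i)$, so, since for explicit methods these recursions are identical to the stage equations in \eqref{MVERK}, one has $Y_i=\tilde Y_i$ exactly. Because $(\bar c,\bar b,\bar A)$ has classical order $s$ and $F$ is sufficiently smooth, $\tilde y_1-y(h)=\mathcal{O}(h^{s+1})$; and since the nonlinear contributions $h\sum_i\bar b_if(Y_i)$ to the two updates cancel,
\begin{equation*}
y_1-y(h)=(y_1-\tilde y_1)+\mathcal{O}(h^{s+1})=\Bigl(e^{hM}y_0-y_0+w_s(hM)-h\sum_{i=1}^s\bar b_iMY_i\Bigr)+\mathcal{O}(h^{s+1}).
\end{equation*}
Thus the order claim reduces to showing that the bracketed discrepancy is $\mathcal{O}(h^{s+1})$. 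This is a finite computation: expand $e^{hM}y_0-y_0=\sum_{k=1}^s\frac{h^k}{k!}M^ky_0+\mathcal{O}(h^{s+1})$ and each stage $Y_i=y_0+h\bar c_i(My_0+f(y_0))+\mathcal{O}(h^2)$ to the required order (using $\bar c_i=\sum_j\bar a_{ij}$ and $f(Y_j)=f(y_0)+f'_y(y_0)(Y_j-y_0)+\cdots$), substitute, and collect powers of $h$. Invoking the classical relations $\sum_i\bar b_i=1$, $\sum_i\bar b_i\bar c_i=\frac12$, $\sum_i\bar b_i\bar c_i^2=\frac13$, $\sum_{i,j}\bar b_i\bar a_{ij}\bar c_j=\frac16$ --- which are precisely the conditions \eqref{MVERK-two-OC}--\eqref{MVERK-three-OC} obtained in Section~\ref{sec:CRERKmethod} --- one verifies term by term for $s=1,2,3$ that $h\sum_i\bar b_iMY_i$ agrees with $e^{hM}y_0-y_0+w_s(hM)$ up to $\mathcal{O}(h^{s+1})$ exactly when $w_s(hM)$ is the function prescribed there ($w_1=0$; $w_2=\frac{h^2}{2}Mf(y_0)$; $w_3=\frac16 h^2M(3f(y_0)+h(Mf(y_0)+f'_y(y_0)(My_0+f(y_0))))$). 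This settles the MVERK case.

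For the SVERK case I would carry out the analogous but more laborious direct Taylor expansion. Now the internal stages $Y_i=e^{\bar c_ihM}y_0+h\sum_j\bar a_{ij}f(Y_j)$ no longer coincide with the classical RK stages, so instead I would expand $Y_2$ and $Y_3$ in powers of $h$ via $e^{\bar c_ihM}y_0=y_0+\bar c_ihMy_0+\frac12\bar c_i^2h^2M^2y_0+\cdots$ together with $f(Y_j)=f(y_0)+f'_y(y_0)(Y_j-y_0)+\cdots$, insert them into the update $y_1=e^{hM}y_0+h\sum_i\bar b_if(Y_i)+\widetilde w_s(hM)$, and match term by term against the Taylor expansion of $y(h)$ obtained by repeated differentiation of $y'=My+f(y)$ (equivalently from the variation-of-constants formula $y(h)=e^{hM}y_0+\int_0^he^{(h-\tau)M}f(y(\tau))\,d\tau$). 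One then checks that, once the mixed $M$--$f$ contributions are absorbed into $\widetilde w_s(hM)$ exactly as prescribed in Section~\ref{sec:SVERKmethod}, the remaining conditions imposed on $(\bar c,\bar b,\bar A)$ for orders $1,2,3$ are again the classical RK order conditions \eqref{MVERK-two-OC}--\eqref{MVERK-three-OC}; since $(\bar c,\bar b,\bar A)$ satisfies them by hypothesis, the resulting SVERK method has order $s$. Finally, the $A$-stability of all these methods is the content of the first paragraph.

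I expect the main obstacle to be the order-three bookkeeping in the SVERK case. Because the exponential factors $e^{\bar c_ihM}$ perturb the internal stages already at $\mathcal{O}(h^2)$, the crude estimate ``the stages differ from the classical RK stages by $\mathcal{O}(h^2)$, hence the updates by $\mathcal{O}(h^3)$'' is not by itself enough at order three, and one must follow the $M^2y_0$, $M^2f(y_0)$, $Mf'_y(y_0)(My_0+f(y_0))$ and $(M+f'_y(y_0))Mf(y_0)$ contributions carefully to confirm that $\widetilde w_3(hM)$ cancels exactly the discrepancy and leaves only $\bar b_2\bar a_{21}+\bar b_3(\bar a_{31}+\bar a_{32})=\frac12$, $\bar b_2\bar a_{21}^2+\bar b_3(\bar a_{31}+\bar a_{32})^2=\frac13$ and $\bar b_3\bar a_{32}\bar a_{21}=\frac16$. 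Away from this point the argument is a routine, if lengthy, expansion, in fact already implicit in the constructions of Sections~\ref{sec:CRERKmethod} and \ref{sec:SVERKmethod}.
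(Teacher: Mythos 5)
Your proposal is correct and follows essentially the same route as the paper: the authors' own proof simply appeals to Theorem~\ref{A-Stab} for $A$-stability and to the Taylor-expansion derivations of the order conditions in Sections~\ref{sec:CRERKmethod} and \ref{sec:SVERKmethod}, which is exactly the computation you spell out. Your reorganization of the MVERK case --- identifying the internal stages with those of the classical RK method for the augmented field $F(y)=My+f(y)$ and reducing the order claim to the single discrepancy $e^{hM}y_0-y_0+w_s(hM)-h\sum_i\bar b_iMY_i=\mathcal{O}(h^{s+1})$ --- is a tidier bookkeeping of the same verification, not a different argument.
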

\begin{proof}
The conclusion of this theorem follows from Theorem \ref{A-Stab} and
the derivation for explicit MVERK and SVERKN methods in Section
\ref{sec:CRERKmethod} and Section \ref{sec:SVERKmethod}. \hfill
$\square$
\end{proof}

We next analyse the convergence of the first-order explicit MVERK
method \eqref{MVERK-one-stage-Euler} and the following theorem
states the corresponding result. Our analysis below will be based on
an abstract formulation of \eqref{stiffODEs} as an evolution
equation in a Banach space $(X, \norm{\cdot})$. We choose
\begin{equation}\label{alp}0 \leq \alpha< 1\end{equation}  and
define $ V = D(\tilde{M}^{\alpha}) \subset X $, where $\tilde{M}$
denotes the shifted operator $\tilde{M}=M+\omega I$ with $\omega >
-\alpha$ {and  $D(\tilde{M}^{\alpha})$ stands for the domain of
$\tilde{M}^{\alpha}$ in $X$.} The linear space $V$ is a Banach space
with norm $\norm{v}_{V}=\norm{\tilde{M}^{\alpha} v}$.
\begin{theorem}
It is assumed that \eqref{stiffODEs} has sufficiently smooth
solutions $y: [0, T] \rightarrow V $ with derivatives in $V$, and $
f:  V \rightarrow X$ is twice differentiable and $
\tilde{M}^{\gamma-1 }f^{(r)} \in L^{\infty}(0,T;V)$ with $ 0< \gamma
\leq 1$\footnote{{It is noted that for this $\gamma$ and the
$\alpha$ introduced in \eqref{alp}, this is no relation between
them.}}
 for $ r=0,1,2$.  Furthermore, let $ f $ be locally
Lipschitz-continuous, i.e., there exists a constant {$ L(R)>0$  such
that $\norm{f(y)-f(\tilde{y})}  \leq L \norm{ y- \tilde{y} }_{V}$
for all $\max(\norm{ y }_{V},\norm{ \tilde{y} }_{V})\leq R$}. Then
the convergence of the method \eqref{MVERK-one-stage-Euler} is given
by
$$\norm{e_{n}}_{V}\leq Ch,$$
where {$e_n=y_n-y(t_n)$, and} the constant $C$ is dependent on $T$,
but independent of $n$ and $h$.
\end{theorem}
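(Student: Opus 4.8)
The plan is to run the classical Lady Windermere's fan argument for the variation-of-constants formulation, adapted to the modified exponential Euler scheme \eqref{MVERK-one-stage-Euler}. First I would write the exact solution of \eqref{stiffODEs} via the variation-of-constants formula $y(t_{n+1}) = e^{hM}y(t_n) + \int_0^h e^{(h-\tau)M} f(y(t_n+\tau))\, d\tau$, and compare it with the numerical step $y_{n+1} = e^{hM}y_n + h f(y_n)$. Subtracting and writing $e_n = y_n - y(t_n)$ gives a recursion $e_{n+1} = e^{hM} e_n + h\bigl(f(y_n) - f(y(t_n))\bigr) + \delta_{n+1}$, where the local defect is $\delta_{n+1} = h f(y(t_n)) - \int_0^h e^{(h-\tau)M} f(y(t_n+\tau))\, d\tau$. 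The first task is to bound $\delta_{n+1}$ in the $V$-norm. Using $e^{(h-\tau)M} = I + (h-\tau)M\varphi_1((h-\tau)M)$ and the smoothness/regularity hypotheses on $f$ and $y$ — in particular $\tilde M^{\gamma-1} f^{(r)} \in L^\infty(0,T;V)$ — together with the standard parabolic smoothing bounds $\norm{\tilde M^{\beta} e^{tM}} \le C t^{-\beta}$ for $0\le\beta\le 1$, one estimates $\norm{\tilde M^{\alpha}\delta_{n+1}} \le C h^2$ (the leading term $\int_0^h e^{(h-\tau)M}\,d\tau\, f - h f$ is $O(h^{1+\gamma})$ in the worst case but at least $O(h^2)$ under the stated smoothness; in either case the global accumulation loses one power of $h$, which is exactly what the $O(h)$ claim allows).

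Next I would solve the error recursion explicitly: $e_n = \sum_{k=1}^{n} e^{(n-k)hM}\bigl[\,h(f(y_{k-1}) - f(y(t_{k-1}))) + \delta_k\,\bigr]$, using $e_0 = 0$. Taking the $V$-norm, applying the uniform bound $\norm{e^{jhM}}_{V\to V}\le C$ (which holds because $(-M)$ is symmetric positive definite or skew-Hermitian, so $e^{tM}$ is uniformly bounded, a point already emphasised in Section~\ref{sec:CRERKmethod}), and invoking the local Lipschitz bound $\norm{f(y)-f(\tilde y)}\le L\norm{y-\tilde y}_V$ together with $\norm{\tilde M^{\alpha} e^{(n-k)hM}}\le C((n-k)h)^{-\alpha}$ for the Lipschitz term, gives
\begin{equation*}
\norm{e_n}_V \le C h \sum_{k=1}^{n} (1 + ((n-k)h)^{-\alpha})\,\norm{e_{k-1}}_V + C h^2 \sum_{k=1}^{n} (1 + ((n-k)h)^{-\alpha}).
\end{equation*}
The second sum is bounded by $C(T + T^{1-\alpha})\cdot h$ since $h\sum_{j=0}^{n-1}(jh)^{-\alpha}$ is a Riemann sum for $\int_0^{T}\tau^{-\alpha}d\tau<\infty$ when $\alpha<1$. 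A discrete Gronwall inequality with a weakly singular kernel (the generalized Gronwall lemma, as in Henry or Dixon–McKee) then absorbs the first sum and yields $\norm{e_n}_V \le Ch$ with $C$ depending on $T$, $L$, $\alpha$, and the norms of $f$ and $y$, but not on $n$ or $h$. A technical preliminary here is that one must first establish that the numerical iterates $y_n$ stay in a fixed ball around the exact solution in $V$, so that the \emph{local} Lipschitz constant $L$ applies uniformly; this is handled by a standard bootstrap/induction on $n$, using that $\norm{e_n}_V\le Ch$ keeps the iterates within a neighbourhood once $h$ is small enough.

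The main obstacle I anticipate is the careful treatment of the weakly singular factor $((n-k)h)^{-\alpha}$ coming from $\tilde M^{\alpha} e^{(n-k)hM}$ when estimating the nonlinear (Lipschitz) contribution in the $V$-norm: one cannot naively pull $\norm{e^{(n-k)hM}}_{V\to V}\le C$ through, because $f(y_{k-1})-f(y(t_{k-1}))$ is controlled in $X$, not in $V$, so a smoothing estimate is genuinely needed and it introduces the integrable singularity that forces the use of the generalized (rather than classical) discrete Gronwall inequality. Everything else — bounding the defect, summing the Riemann sums, the bootstrap on the iterates — is routine once the functional-analytic setup ($\tilde M$, $V=D(\tilde M^\alpha)$, the smoothing bounds) is in place. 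I would also note in passing that the $k=n$ term in the Lipschitz sum involves $((n-k)h)^{-\alpha}=0^{-\alpha}$; this is handled either by the explicit structure (the step $k=n$ contributes $h(f(y_{n-1})-f(y(t_{n-1})))$ with no smoothing factor, since $e^{0\cdot M}=I$ and $\norm{I}_{V\to V}=1$) or by re-indexing so the singular kernel is only ever evaluated at strictly positive arguments.
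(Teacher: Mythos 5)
Your overall skeleton (variation of constants, defect recursion, smoothing estimate for the Lipschitz term, weakly singular discrete Gronwall) matches the paper's, but there is a genuine gap at the decisive step: the treatment of the accumulated defect. You propose to bound each local defect term by term, $\norm{\tilde M^{\alpha}\delta_{n+1}}\le Ch^2$, and then sum. This bound fails for unbounded $M$ under the stated hypotheses. The leading part of the defect is $h(\varphi_1(hM)-I)\hat f(t_n)=h^2M\tilde\varphi(hM)\hat f(t_n)$, and estimating it in $V$ requires controlling $\tilde M^{1+\alpha}\hat f$, whereas the assumption only gives boundedness of $\tilde M^{\gamma-1+\alpha}\hat f$ with $\gamma\le 1$; paying the missing power $\tilde M^{2-\gamma}$ against $\tilde\varphi(hM)$ costs $h^{-(2-\gamma)}$, so the term is only $O(h^{\gamma})$ individually, and even your fallback $O(h^{1+\gamma})$ would sum over $n\sim T/h$ steps to $O(h^{\gamma})$, not $O(h)$. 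This is exactly the order-reduction phenomenon for exponential integrators: the naive ``local error times number of steps'' argument does not close.

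The paper's proof resolves this by \emph{not} estimating $\delta_{n+1}$ individually. It keeps the whole sum $\sum_{j=0}^{n-1}\tilde M^{\alpha}e^{jhM}\delta_{n-j}$ and applies Abel summation (summation by parts), using that the partial sums $W_k=\sum_{j=0}^{k}hM\tilde\varphi(hM)e^{jhM}$ are uniformly bounded (a telescoping/quadrature cancellation, Lemma 3.1 of Hochbruck--Ostermann 2005 and Lemma 2 of the companion paper) while the differences $\tilde M^{\alpha}\bigl(\hat f(t_{j})-\hat f(t_{j+1})\bigr)$ are $O(h)$ by smoothness of the exact solution. The same device is used for the integral remainder in the defect. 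This cancellation across steps is the essential idea your proposal is missing; without it the argument does not yield first-order convergence. The parts of your write-up concerning the weakly singular kernel $((n-k)h)^{-\alpha}$, the generalized Gronwall lemma, and the bootstrap keeping the iterates in the ball where the local Lipschitz constant applies are sound (and indeed more carefully stated than in the paper), but they do not compensate for the defect estimate.
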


\begin{proof}
Inserting the exact solution into the method
\eqref{MVERK-one-stage-Euler}, we obtain
\begin{equation}\label{e1}
y(t_{n+1})=e^{hM}y(t_n)+h\hat{f}(t_n)+\delta_{n+1},
\end{equation}
where $\delta_{n+1}$ presents the discrepancies of the method
\eqref{MVERK-one-stage-Euler}, and $\hat{f}(t)=f(y(t))$. It follows
from the variation-of-constants formula and Taylor series that
\begin{equation}\label{e2}
\begin{aligned}
y(t_{n}+h)&=e^{hM}y(t_n)+\int_{0}^{h}e^{(h-\tau)M}\hat{f}(t_n+\tau)d\tau\\
&=e^{hM}y(t_n)+\int_{0}^{h}e^{(h-\tau)M}\hat{f}(t_n)d\tau
+\int_{0}^{h}e^{(h-\tau)M}\int_{0}^{\tau}\hat{f}'(t_n+\sigma)d\sigma d\tau\\
&=e^{hM}y(t_n)+h\varphi_1(hM)\hat{f}(t_n)
+\int_{0}^{h}e^{(h-\tau)M}\int_{0}^{\tau}\hat{f}'(t_n+\sigma)d\sigma d\tau.
\end{aligned}
\end{equation}
Subtracting \eqref{e1} from \eqref{e2}, we have
\begin{equation}\label{e3}
\begin{aligned}
\delta_{n+1}
=h(\varphi_1(hM)-I)\hat{f}(t_n)+\int_{0}^{h}e^{(h-\tau)M}\int_{0}^{\tau}\hat{f}'(t_n+\sigma)d\sigma d\tau.
\end{aligned}
\end{equation}

Let $e_n=y_n-y(t_n)$, and from \eqref{MVERK-one-stage-Euler} and
\eqref{e1}, it follows that
\begin{equation*}\label{e4}
e_{n+1}=e^{hM}e_{n}+h(f(y_n)-\hat{f}(t_n))-\delta_{n+1}.
\end{equation*}
Using recursion formula, one has
\begin{equation*}\label{e5}
\sum_{j=0}^{n-1}\big(e^{(n-j-1)hM}e_{j+1}-e^{(n-j)}hMe_{j}\big)=h\sum_{j=0}^{n-1}e^{(n-j-1)hM}
(f(y_j)-\hat{f}(t_j))-\sum_{j=0}^{n-1}e^{(n-j-1)hM}\delta_{j+1}.
\end{equation*}
Then, we have
\begin{equation*}\label{e6}
e_{n}=h\sum_{j=0}^{n-1}e^{(n-j-1)hM}(f(y_j)-\hat{f}(t_j))-\sum_{j=0}^{n-1}e^{jhM}\delta_{n-j}.
\end{equation*}
We estimate the global error $e_n$ in the norm $\norm{\cdot}_{V}$ by
\begin{equation}\label{e7}
\norm{e_{n}}_{V} \leq h\norm{\sum_{j=0}^{n-1}e^{(n-j-1)hM}(f(y_j)-\hat{f}(t_j))}_{V} +\norm{\sum_{j=0}^{n-1}e^{jhM}\delta_{n-j}}_{V}.
\end{equation}
It is easily deduced from $\norm{v}_{V}=\norm{\tilde{M}^{\alpha}v}$
and the  Lemma 3.1 of \cite{Hochbruck2005a} that
\begin{equation*}\label{e8}
\begin{aligned}
& h\norm{\sum_{j=0}^{n-1}e^{(n-j-1)hM}(f(y_j)-\hat{f}(t_j))}_{V}
= h\norm{\sum_{j=0}^{n-1}\tilde{M}^{\alpha}e^{(n-j-1)hM}(f(y_j)-\hat{f}(t_j))}\\
\leq &h\sum_{j=0}^{n-1}t_{n-j-1}^{\alpha}\norm{t_{n-j-1}^{-\alpha}\tilde{M}^{\alpha}e^{(n-j-1)hM}}\norm{f(y_j)-\hat{f}(t_j)}
\leq CLh\sum_{j=0}^{n-1}t_{n-j-1}^{\alpha}\norm{e_{j}}_{V}.
\end{aligned}
\end{equation*}
Inserting the formula \eqref{e3} into the second term  on the right-hand side of \eqref{e7} gives
\begin{equation}\label{e9}
\begin{aligned}
&\norm{\sum_{j=0}^{n-1}e^{jhM}\delta_{n-j}}_{V}
=\norm{\sum_{j=0}^{n-1}\tilde{M}^{\alpha}e^{jhM}\delta_{n-j}}\\
\leq & \norm{\sum_{j=0}^{n-1}\tilde{M}^{\alpha}e^{jhM}h(\varphi_1(hM)-I)\hat{f}(t_{n-j-1})}
+\norm{\sum_{j=0}^{n-1}\tilde{M}^{\alpha}e^{jhM}\int_{0}^{h}e^{(h-\tau)M}\int_{0}^{\tau}\hat{f}'(t_{n-j-1}+\sigma)d\sigma d\tau}.
\end{aligned}
\end{equation}
We  note that there exists a bounded operator $\tilde{\varphi}(hM)$ with
$$\varphi_1(hM)-I=\varphi_1(hM)-\varphi_1(0)=hM\tilde{\varphi}(hM).$$
Then combining with the Lemma 2 of \cite{Hochbruck2005b} yields
\begin{equation*}\label{e10}
\begin{aligned}
& \norm{\sum_{j=0}^{n-1}\tilde{M}^{\alpha}e^{jhM}h(\varphi_1(hM)-I)\hat{f}(t_{n-j-1})}
= h \norm{\sum_{j=0}^{n-1}hM\tilde{\varphi}(hM) e^{jhM}\cdot \tilde{M}^{\alpha}\hat{f}(t_{n-j-1})}\\
\leq & h \norm{W_{n-1}}\norm{v_1}+h\sum_{j=0}^{n-2}\norm{W_{j}}\norm{v_{n-j-1}-v_{n-j}},
\end{aligned}
\end{equation*}
where $\omega_j=hM\tilde{\varphi}(hM) e^{jhM},
v_j=\tilde{M}^{\alpha}\hat{f}(t_{j-1})$ and
$W_k=\sum_{j=0}^{k}\omega_j$. With the help of Lemma 3.1 of
\cite{Hochbruck2005a}, we have $\norm{W_{j}} \leq C$. Furthermore,
on the basis of the  facts that $\norm{f}_{V}$ and $\norm{f'}_{V}$
are bounded, we then get
\begin{equation}\label{e11}
\begin{aligned}
 \norm{\sum_{j=0}^{n-1}\tilde{M}^{\alpha}e^{jhM}h(\varphi_1(hM)-I)\hat{f}(t_{n-j-1})}
\leq  Ch.
\end{aligned}
\end{equation}
Similarly to the above analysis, it follows that
\begin{equation*}\label{e12}
\begin{aligned}
&\norm{\sum_{j=0}^{n-1}\tilde{M}^{\alpha}e^{jhM}\int_{0}^{h}e^{(h-\tau)M}\int_{0}^{\tau}\hat{f}'(t_{j-1}+\sigma)d\sigma d\tau}\\
\leq &\int_{0}^{h}\norm{\sum_{j=0}^{n-1} h\tilde{M}e^{(jh+h-\tau)M}\cdot \frac{1}{h}\int_{0}^{\tau}\tilde{M}^{\alpha-1}\hat{f}'(t_{n-j-1}+\sigma)d\sigma}  \\
\leq & \int_{0}^{h} \Big( \norm{W_{n-1}}\norm{v_1}+\sum_{j=0}^{n-2}\norm{W_{j}}\norm{v_{n-j-1}-v_{n-j}}\Big)d\tau,
\end{aligned}
\end{equation*}
where $\omega_j=h\tilde{M}e^{(jh+h-\tau)M}$ and $v_j=\frac{1}{h}\int_{0}^{\tau}\tilde{M}^{\alpha-1}\hat{f}'(t_{j-1}+\sigma)d\sigma$. Using the expression of $v_j$ and conditions $\tilde{M}^{\alpha-1}f^{r}\in L^{\infty}(0,T;V), r=1,2$, we obtain
\begin{equation*}\label{e13}
\begin{aligned}
\norm{v_1}=\norm{\frac{1}{h}\int_{0}^{\tau}\tilde{M}^{\alpha-1}\hat{f}'(t_{0}+\sigma)d\sigma}\leq C,
\end{aligned}
\end{equation*}
and
\begin{equation*}\label{e14}
\begin{aligned}
\norm{v_{n-j-1}-v_{n-j}}=\norm{\frac{1}{h}\int_{0}^{\tau}\tilde{M}^{\alpha-1}\Big(\hat{f}'(t_{n-j-2}+\sigma)-\hat{f}'(t_{n-j-1}+\sigma)\Big)d\sigma}
\leq C.
\end{aligned}
\end{equation*}
Then combining with $\norm{W_{j}} \leq C$ gives
\begin{equation}\label{e15}
\begin{aligned}
\norm{\sum_{j=0}^{n-1}\tilde{M}^{\alpha}e^{jhM}\int_{0}^{h}e^{(h-\tau)M}\int_{0}^{\tau}\hat{f}'(t_{j-1}+\sigma)d\sigma d\tau}
\leq Ch.
\end{aligned}
\end{equation}
Inserting the {formulas} \eqref{e11} and \eqref{e15} into \eqref{e9}
yields
\begin{equation*}
\begin{aligned}
\norm{\sum_{j=0}^{n-1}e^{jhM}\delta_{n-j}}_{V}\leq Ch.
\end{aligned}
\end{equation*}
Therefore, according to the above analysis, we obtain
\begin{equation*}
\begin{aligned}
\norm{e_{n}}_{V}\leq CLh\sum_{j=0}^{n-1}t_{n-j-1}^{\alpha}\norm{e_{j}}_{V}+ Ch .
\end{aligned}
\end{equation*}
Finally, using the Gronwall's inequality, we arrive at the final
conclusion as follows
\begin{equation*}
\begin{aligned}
\norm{e_{n}}_{V}\leq Ch .
\end{aligned}
\end{equation*}
The proof of the theorem is complete. \hfill  $\square$
\end{proof}

\section{Numerical experiments}\label{numExperiments}

Since it is well known that explicit exponential integrators
outperform standard integrators, we do not consider standard
integrators in our numerical experiments. Differently from multistep
methods, a significant advantage of one-step methods is conceptually
simple and easy to change stepsize. Therefore, it seems plausible
that our numerical experiments are implemented under the assumption
that the variable stepsize is allowed at each time step.

In this section, we carry out numerical experiments to show the high
efficiency  of our methods. We select the following methods to make
comparisons:
\begin{itemize}   \item First-order methods: \begin{itemize}
  \item E-Euler: the explicit exponential
Euler method \eqref{Pro-exponential-Euler} of order one proposed in \cite{Hochbruck2010};
  \item MVERK1: the  1-stage explicit MVERK/SVERK \eqref{MVERK-one-stage-Euler}  of order one presented in this paper.
\end{itemize}
\item Second-order methods:
\begin{itemize}
  \item ERK2: the explicit exponential Runge-Kutta
 method  of order two proposed in \cite{Hochbruck2010};

  \item MVERK2-1: the  2-stage explicit MVERK  \eqref{MVERK-two-stage-1} of order two presented in this paper;
    \item MVERK2-2: the  2-stage explicit MVERK  \eqref{MVERK-two-stage-2}  of order two presented in this paper;
      \item SVERK2-1: the  2-stage explicit SVERK  \eqref{SVERK-two-stage-1}  of order two presented in this paper;
    \item SVERK2-2: the  2-stage explicit SVERK  \eqref{SVERK-two-stage-2}  of order two presented in this paper.
\end{itemize}

\item Third-order methods:
\begin{itemize}
  \item ERK3: the explicit exponential Runge-Kutta
 method  of order three proposed in \cite{Hochbruck2010};
  \item MVERK3-1: the  3-stage explicit MVERK  \eqref{MVERK-three-stage-1} of order three presented in this paper;
    \item MVERK3-2: the  3-stage explicit MVERK  \eqref{MVERK-three-stage-2}  of order three presented in this paper;
      \item SVERK3-1: the  3-stage explicit SVERK  \eqref{SVERK-three-stage-1}  of order three presented in this paper;
    \item SVERK3-2: the  3-stage explicit SVERK \eqref{SVERK-three-stage-2}  of order three presented in this paper.
\end{itemize}
\end{itemize}

\begin{problem}\label{Allen-Cahn}
We first consider a stiff partial differential equation: Allen--Cahn
equation. Allen--Cahn equation (see, e.g. \cite{Cox2002,Kassam2005})
is a reaction-diffusion equation of mathematical physics,  given by
$$u_{t}-\epsilon u_{xx}=u-u^3,\ \ x\in[-1,1],$$
with   $\epsilon= 0.01$ and initial conditions
$$ u(x,0)=0.53x+0.47\sin(-1.5\pi x),\ \ u(-1,t)=-1,\ \ u(1,t)=1.$$
 We use  a 32-point Chebyshev spectral method which yields a system
of ordinary differential equations
$$U_{t}-AU=U-U^3.$$
 We
apply the MATLAB function \emph{cheb} from \cite{Trefethen2000} for
the grid generation and obtain the differentiation matrix $M$. { The
form of this Chebyshev differentiation matrix is referred to Theorem
7 of Chapter 6 in \cite{Trefethen2000}.} It is noted that the
differentiation matrix $M$ in this example is full. This system is
integrated on $[0,1]$ with different stepsizes $h=1/2^k$ for
$k=8,9,\ldots,13$. The  global errors $GE:= \norm{U_{n}-U(t_n)}$
against the stepsizes and the CPU time are given  in Figs.
\ref{PP1-1} and \ref{PP1-2}, respectively.

\begin{figure}[!htb]
\centering
\begin{tabular}[c]{cccc}%
  \subfigure[]{\includegraphics[width=4.5cm,height=4.5cm]{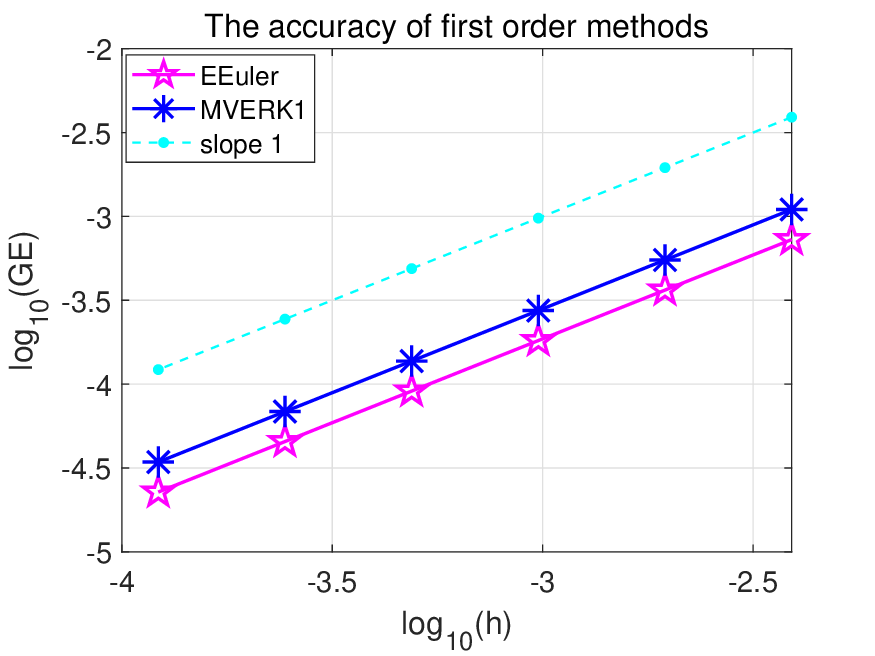}}
  \subfigure[]{\includegraphics[width=4.5cm,height=4.5cm]{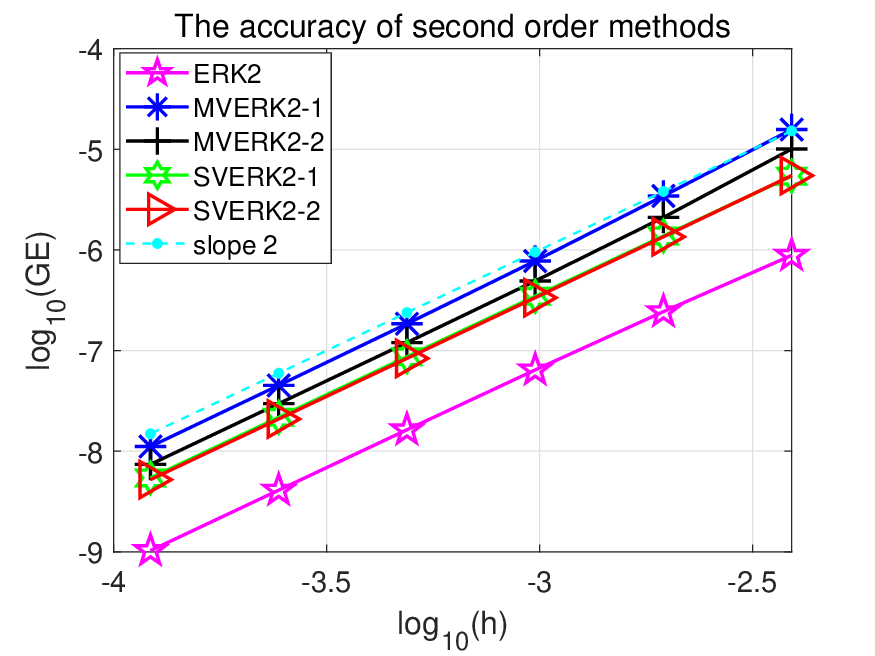}}
    \subfigure[]{\includegraphics[width=4.5cm,height=4.5cm]{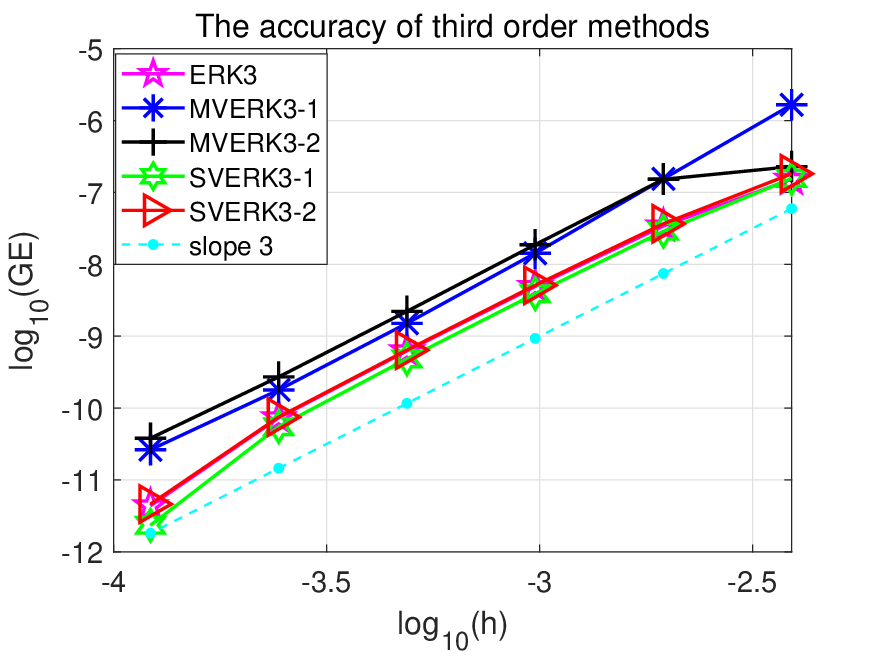}}
\end{tabular}
\caption{Results for accuracy of Problem 1: The $\log$-$\log$ plots of global
errors against $h$}.\label{PP1-1}
\end{figure}

\begin{figure}[!htb]
\centering
\begin{tabular}[c]{cccc}%
  \subfigure[]{\includegraphics[width=4.5cm,height=4.5cm]{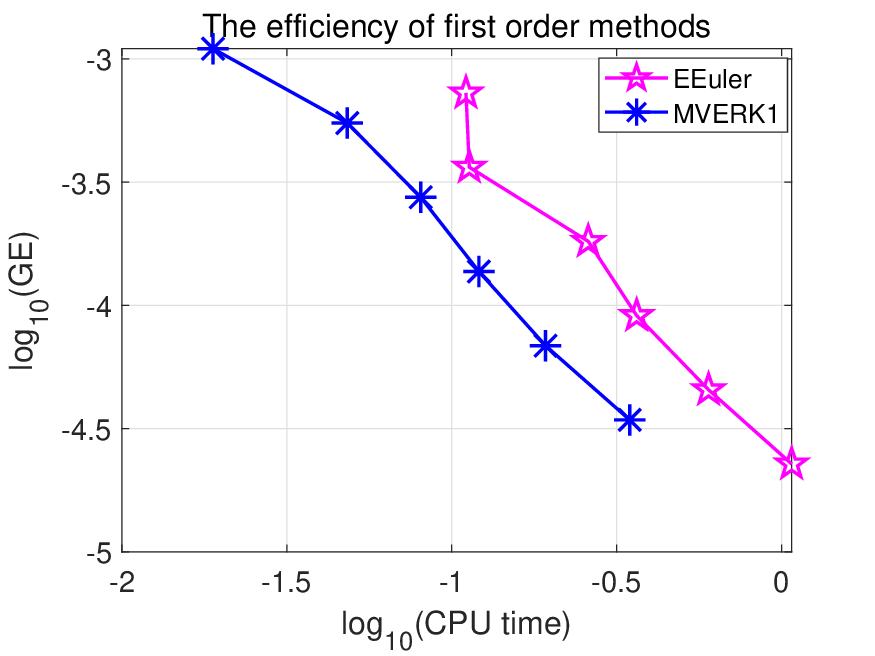}}
  \subfigure[]{\includegraphics[width=4.5cm,height=4.5cm]{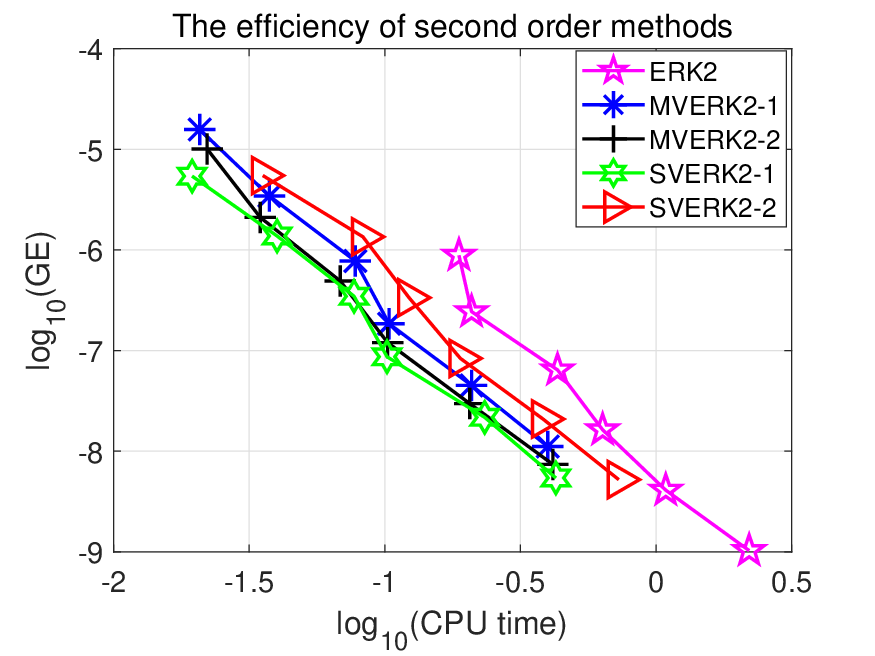}}
    \subfigure[]{\includegraphics[width=4.5cm,height=4.5cm]{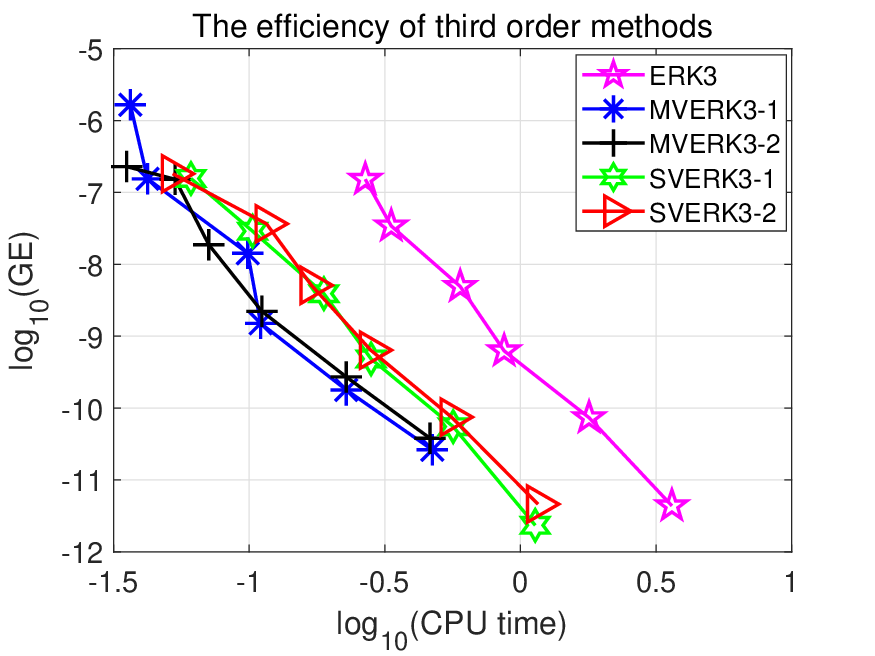}}
\end{tabular}
\caption{Results for efficiency of Problem 1: The $\log$-$\log$
plots of global errors against the CPU time.}\label{PP1-2}
\end{figure}
\end{problem}

\begin{problem}\label{Wined-induced-oscillation}
Consider the following averaged system in wind-induced
 oscillation (see
 \cite{Mclachlan-98})
\begin{equation*}
\begin{aligned}& \left(
                   \begin{array}{c}
                     x_1 \\
                      x_2 \\
                   \end{array}
                 \right)
'= \left(
    \begin{array}{cc}
      -\zeta& -\lambda\\
       \lambda & -\zeta \\
    \end{array}
  \right)\left(
                   \begin{array}{c}
                     x_1 \\
                      x_2 \\
                   \end{array}
                 \right)+
\left(
                                                                           \begin{array}{c}
                                                                           x_1x_2\\
\frac{1}{2}(x_1^2-x_2^2)
                                                                           \end{array}
                                                                         \right),
\end{aligned}\end{equation*}
 where $\zeta \geq
0$ is a damping factor and $\lambda$ is a detuning parameter. We
solve this system on $[0,10]$ with   $h=1/2^k$ for $k=3,4,\ldots,8$.
Figs. \ref{PP3-1} and \ref{PP3-2} display  the global errors $GE$
against the stepsizes and the CPU time, respectively.

\begin{figure}[!htb]
\centering
\begin{tabular}[c]{cccc}%
  \subfigure[]{\includegraphics[width=4.5cm,height=4.5cm]{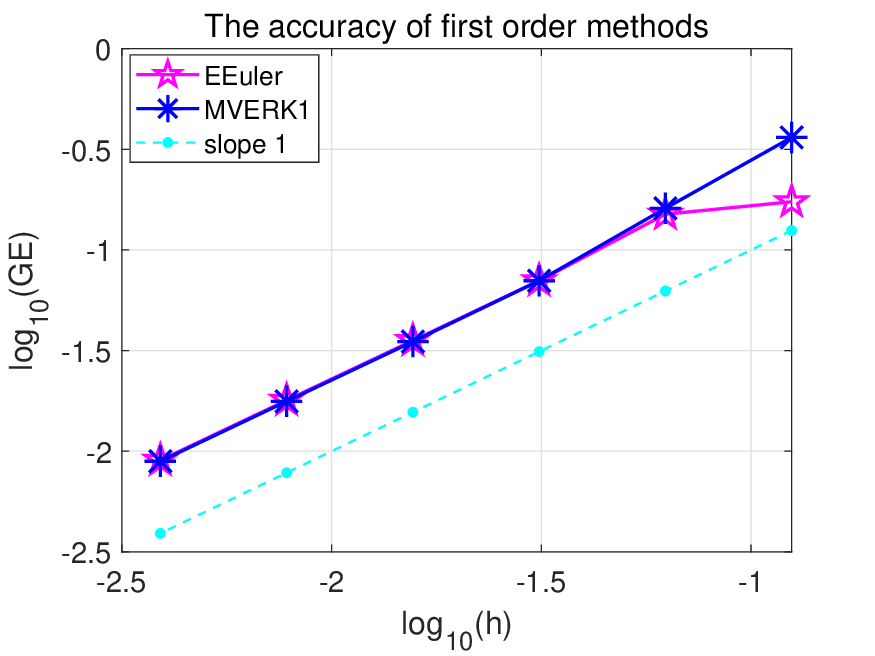}}
  \subfigure[]{\includegraphics[width=4.5cm,height=4.5cm]{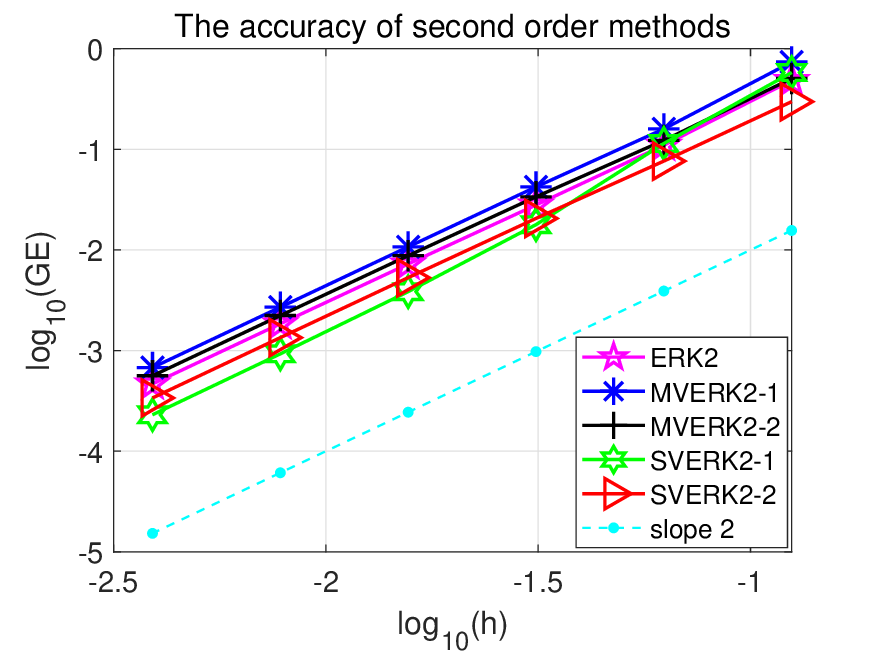}}
    \subfigure[]{\includegraphics[width=4.5cm,height=4.5cm]{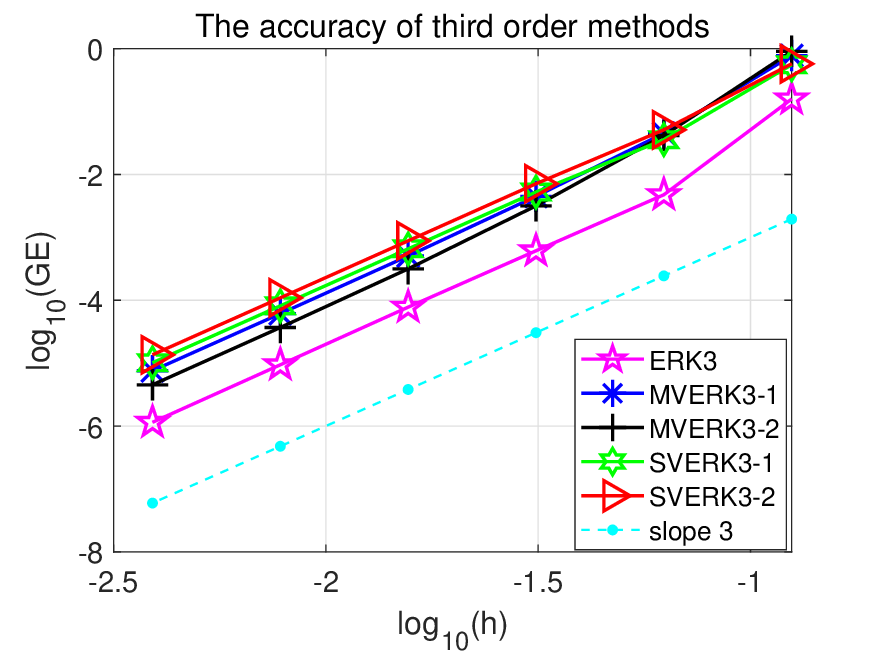}}
\end{tabular}
\caption{Results for accuracy of Problem 2: The $\log$-$\log$ plots
of global errors against $h$.}\label{PP3-1}
\end{figure}

\begin{figure}[!htb]
\centering
\begin{tabular}[c]{cccc}%
  \subfigure[]{\includegraphics[width=4.5cm,height=4.5cm]{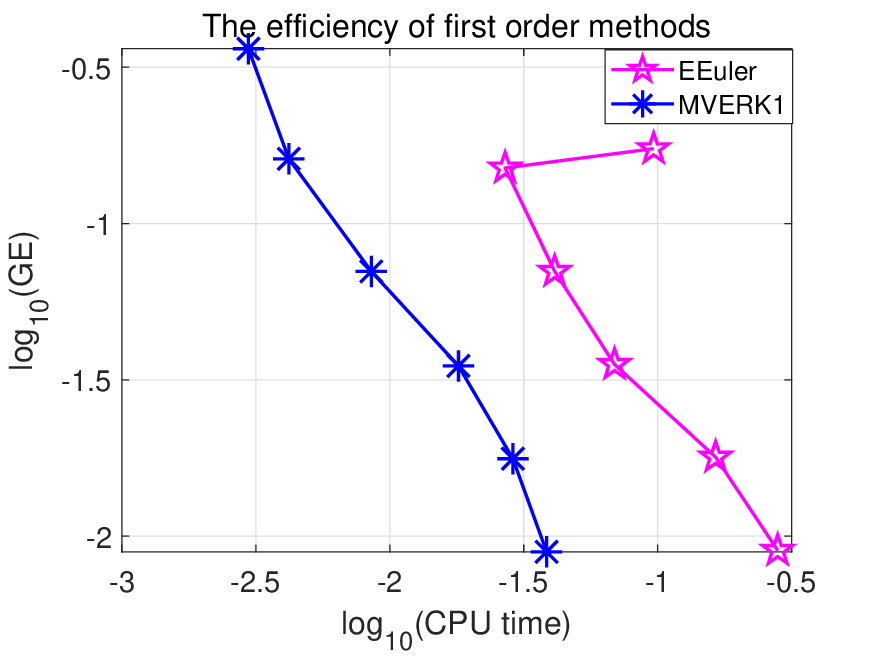}}
  \subfigure[]{\includegraphics[width=4.5cm,height=4.5cm]{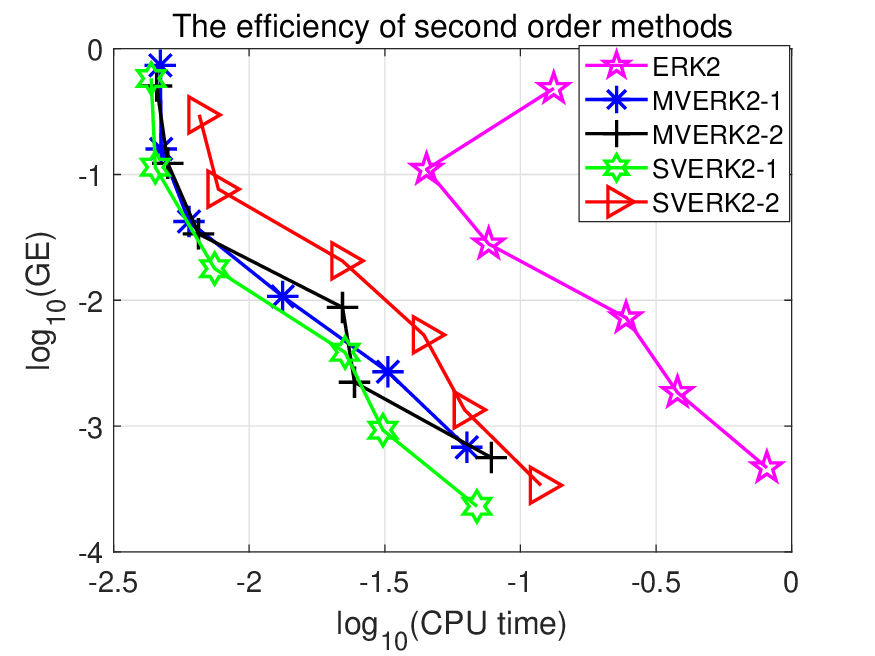}}
    \subfigure[]{\includegraphics[width=4.5cm,height=4.5cm]{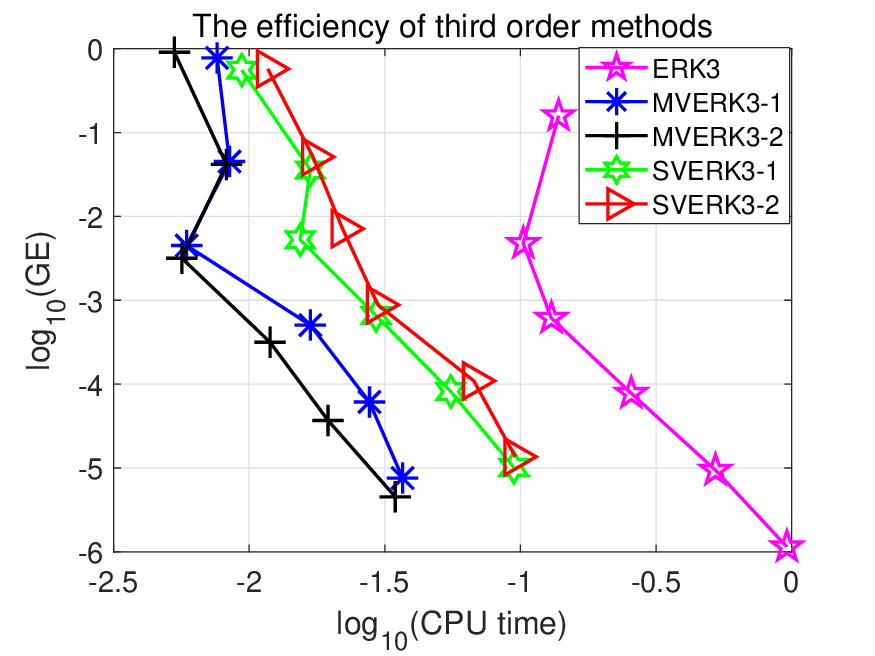}}
\end{tabular}
\caption{Results for efficiency of Problem 2: The $\log$-$\log$
plots of global errors against the CPU time.}\label{PP3-2}
\end{figure}
\end{problem}

\begin{problem}\label{Non-Schrodinger}
 Consider  the nonlinear
Schr\"{o}dinger equation (see  \cite{Chen2001})
\begin{equation*}\begin{aligned}
&i\psi_{t}+\psi_{xx}+2|\psi|^{2}\psi=0,\quad
 \psi(x,0)= 0.5 + 0.025 \cos(\mu x),\\
\end{aligned}
\end{equation*}
with the periodic boundary condition $\psi(0,t)=\psi(L,t).$
Following \cite{Chen2001},  we choose $L =4\sqrt{2}\pi$ and $\mu =
2\pi/L.$ The initial condition chosen here is in the vicinity of the
homoclinic orbit.
\end{problem}

Using $\psi = p + \textmd{i}q,$ this equation can be rewritten as a
pair of real-valued equations
\begin{equation*}\label{semi}
\begin{aligned}
&p_t +q_{xx} + 2(p^2  + q^2)q = 0,\\
&q_t -p_{xx} -2(p^2  + q^2)p = 0.\\
\end{aligned}
\end{equation*}
Discretising the spatial derivative $\partial_{xx}$ by the
pseudospectral method given in  \cite{Chen2001},  this problem is
converted into the following system:
\begin{equation}\label{semi sd}
\left(
  \begin{array}{c}
    \textbf{p} \\
    \textbf{q} \\
  \end{array}
\right)'=
 \begin{aligned}
 \left(
   \begin{array}{cc}
     0 & -D_2 \\
     D_2 & 0 \\
   \end{array}
 \right)\left(
  \begin{array}{c}
    \textbf{p} \\
    \textbf{q} \\
  \end{array}
\right)+\left(
          \begin{array}{c}
            -2(\textbf{p}^{2}+ \textbf{q}^{2})\cdot  \textbf{q} \\
            2( \textbf{p}^{2}+ \textbf{q}^{2})\cdot \textbf{p} \\
          \end{array}
        \right)
\end{aligned}
\end{equation}
where $\textbf{p}=(p_0,p_1,\ldots,p_{N-1})^{\intercal},\
\textbf{q}=(q_0,q_1,\ldots,q_{N-1})^{\intercal}$ and
$D_2=(D_2)_{0\leq j,k\leq N-1}$ is the pseudospectral differential
matrix defined by:
\begin{equation*}
(D_2)_{jk}=\left\{\begin{aligned}
&\frac{1}{2}\mu^2(-1)^{j+k+1}\frac{1}{\sin^2(\mu(x_j-x_k)/2)},\quad j\neq k,\\
&-\mu^2\frac{2(N/2)^2+1}{6},\quad\quad\quad\quad\quad\quad\ \ \ \ \   j=k.
\end{aligned}\right.
\end{equation*}
In this test, we choose $N=64$  and integrate the
system on $[0,1]$ with $h=1/2^k$ for $k=2,3,\ldots,7$. The  global
errors $GE$  against the stepsizes and the CPU time are respectively
presented  in Figs. \ref{PP4-1} and \ref{PP4-2}.

From the results of these three numerical experiments, we have the
following observations. Although the  MVERK and SVERK
methods derived in this paper have comparable accuracy in comparison
with standard exponential integrators, our exponential methods
demonstrate lower computational cost and more competitive
efficiency.
\begin{figure}[!htb]
\centering
\begin{tabular}[c]{cccc}%
  \subfigure[]{\includegraphics[width=4.5cm,height=4.5cm]{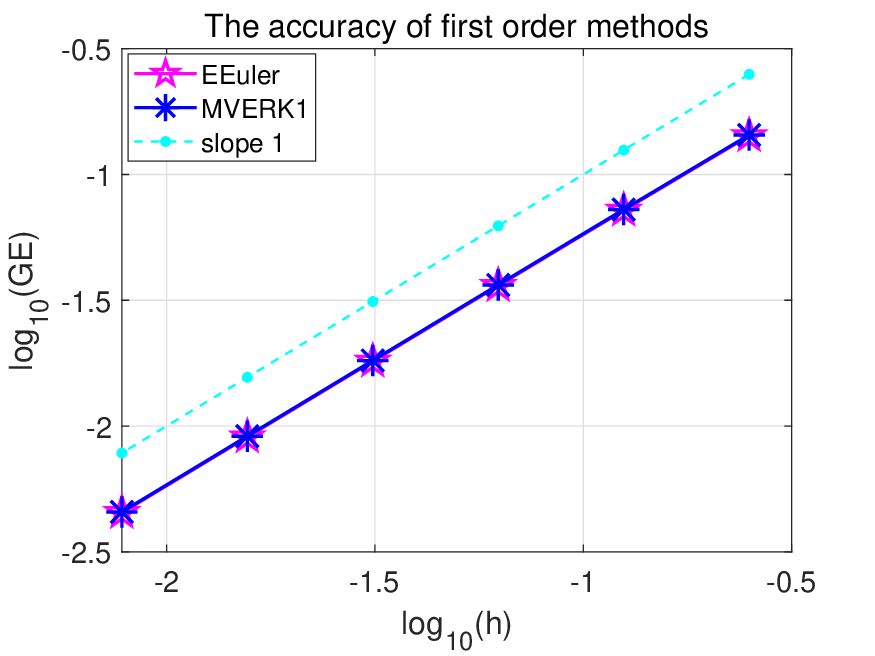}}
  \subfigure[]{\includegraphics[width=4.5cm,height=4.5cm]{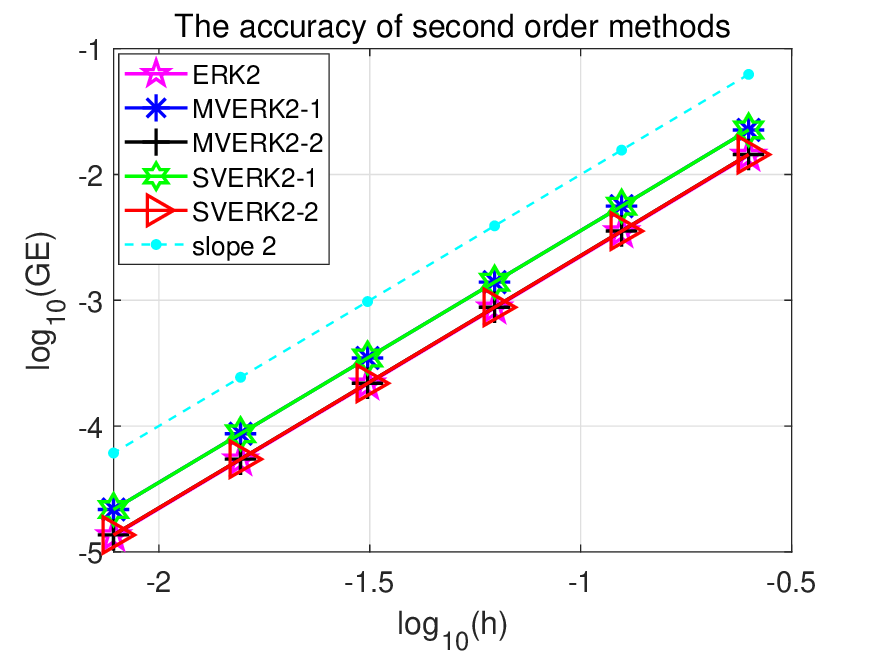}}
    \subfigure[]{\includegraphics[width=4.5cm,height=4.5cm]{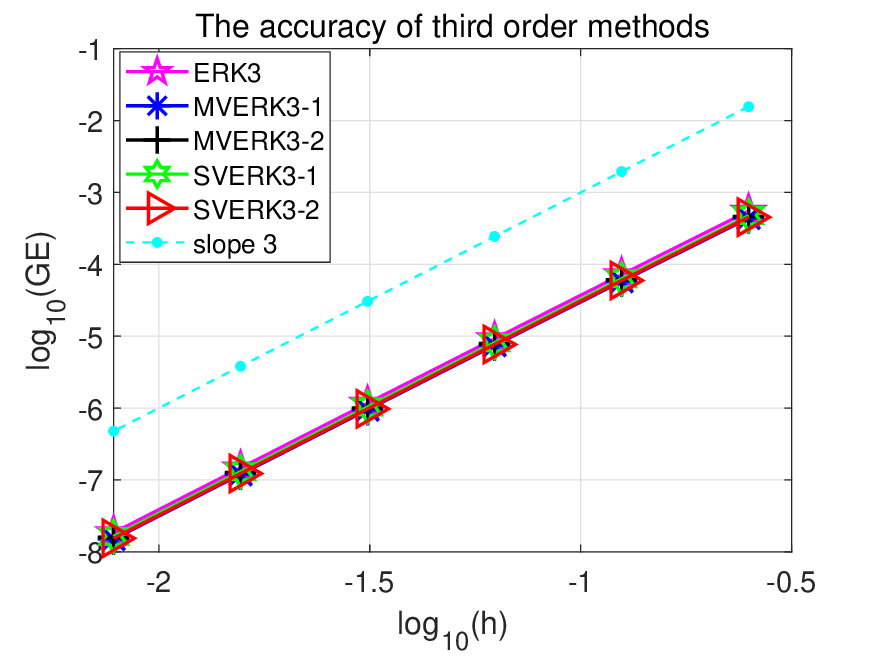}}
\end{tabular}
\caption{Results for accuracy of Problem 3: The $\log$-$\log$ plots
of global errors against $h$.}\label{PP4-1}
\end{figure}

\begin{figure}[!htb]
\centering
\begin{tabular}[c]{cccc}%
  \subfigure[]{\includegraphics[width=4.5cm,height=4.5cm]{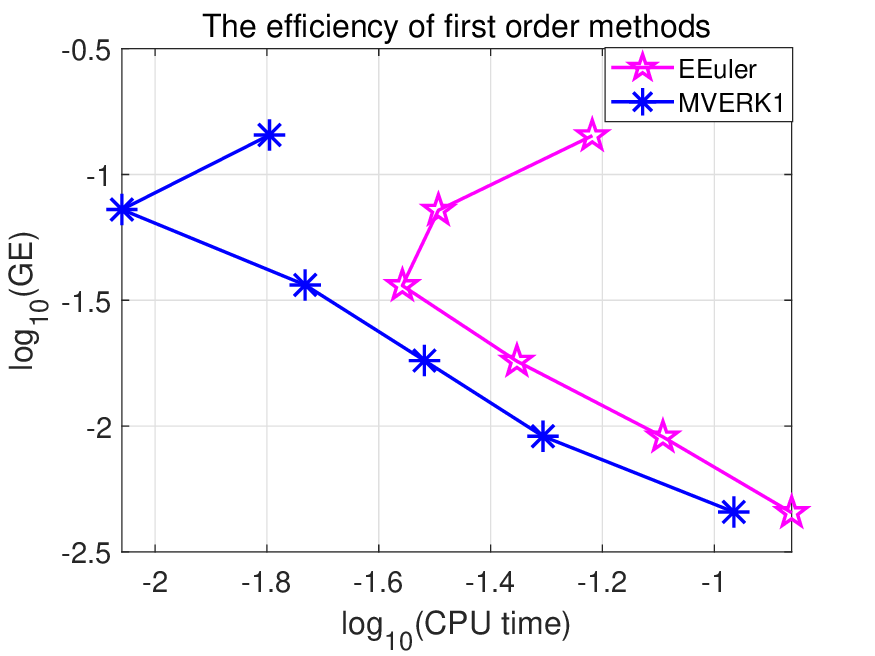}}
  \subfigure[]{\includegraphics[width=4.5cm,height=4.5cm]{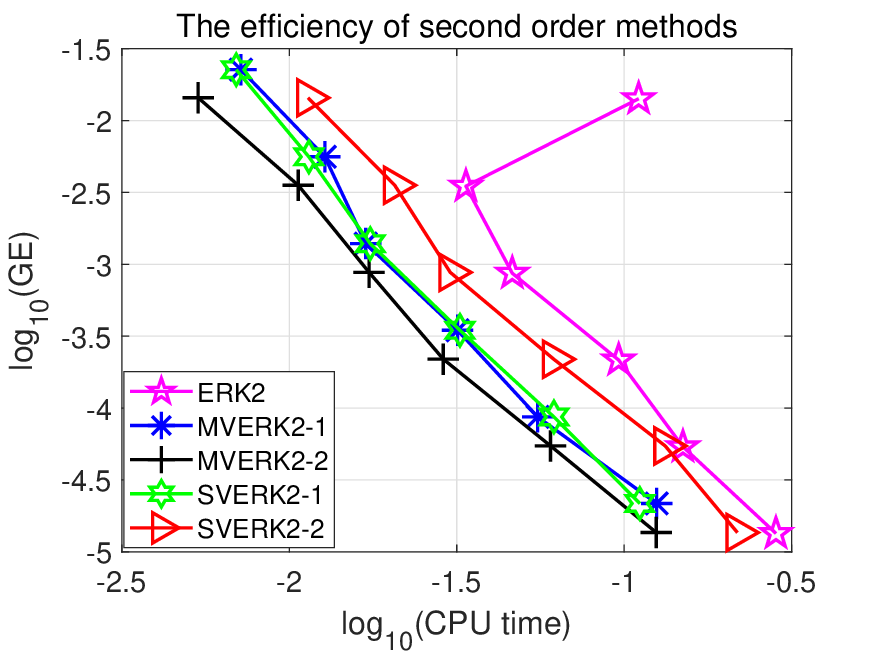}}
    \subfigure[]{\includegraphics[width=4.5cm,height=4.5cm]{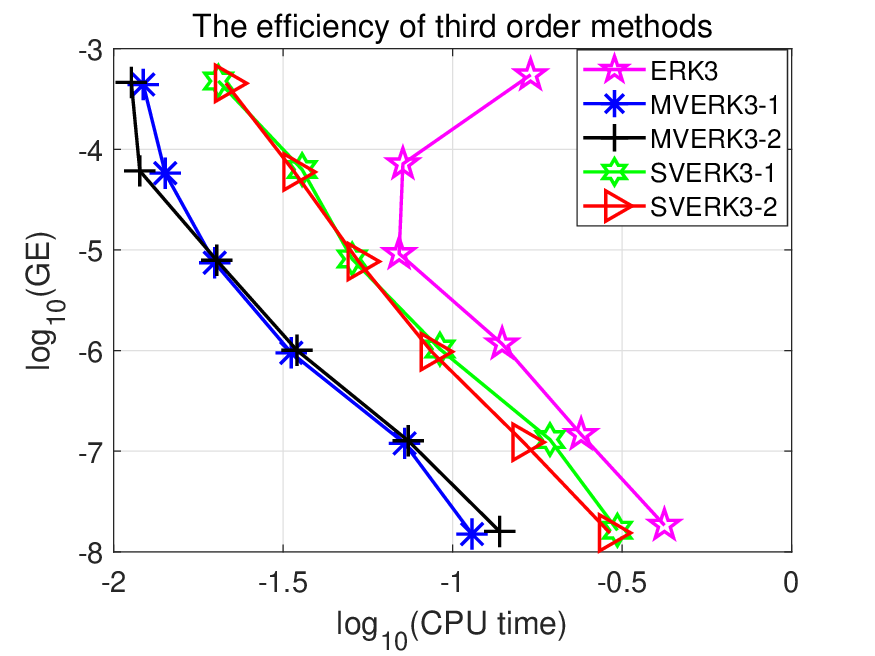}}
\end{tabular}
\caption{Results for efficiency of Problem 3: The $\log$-$\log$
plots of global errors against the CPU time.}\label{PP4-2}
\end{figure}

\section{Conclusions and further research}

As we have known, exponential integrators are very promising for
solving semi-linear systems whose linear part generates the dominant
stiffness or high oscillation of the underlying problem. In this
work, we have presented two new classes of exponential integrators
to solve  stiff systems or highly oscillatory problems. A
distinctive feature of these new integrators is the exact evaluation
of the contribution brought  by the linear term appearing in
\eqref{stiffODEs}. We mainly focus on explicit modified and
simplified exponential integrators in this paper.

A key feature of our approach is that, unlike standard exponential
integrators, our new exponential integrators reduce computational
cost brought by evaluations of matrix exponentials, whereas the
computational cost of exponential integrators appeared in the
literature is heavily dependent on the evaluations of matrix
exponentials. For instance, the numerical results of both
first-order  explicit exponential methods
\eqref{MVERK-one-stage-Euler} and \eqref{Pro-exponential-Euler} show
 almost the same accuracy. However, a closer look at the CPU time
reveals that our first-order explicit exponential method
\eqref{MVERK-one-stage-Euler} is better than the well-known
prototype exponential method \eqref{Pro-exponential-Euler}. The main
reason is that the coefficient of our method
\eqref{MVERK-one-stage-Euler} is independent of the evaluation of
exponential matrix. On the contrary, the prototype exponential
method \eqref{Pro-exponential-Euler} is dependent on the evaluation
of exponential matrix determined by \eqref{varphi1}.  For comparable
accuracy, our explicit exponential methods require in general less
work than the standard explicit exponential method in the
literature. Moreover, the larger the dimension $d$ of  matrix $M$
is, the higher computational cost will be.

We analysed the convergence of the explicit  MVERK method
\eqref{MVERK-one-stage-Euler}. An interesting conclusion
from Theorem \ref{A-Stab-23} is that  the explicit MVERK and SVERKN
methods yielded from  order conditions of standard $p$th-order  RK
methods are of order $p$ for $p=1,2,3$,  when applied to
\eqref{stiffODEs}. Moreover, all of them are $A$-stable in the sense
of Dahlquist. 

Finally, we  presented numerical examples based on Allen--Cahn
equation, the averaged system in wind-induced
 oscillation and the nonlinear Schr\"{o}dinger equation,
which are very relevant in applications. It follows from
the numerical results that our new explicit ERK integrators have
higher
efficiency than the standard ERK methods.

The implicit MVERK integrators and SVERK integrators can be further
investigated.


%

\end{document}